\tikzset{main node/.style={circle,fill=blue!40,draw,minimum size=2cm,inner sep=0pt},}
\pgfplotsset{compat=1.17}
\newtheorem{proposition}{Proposition}
\newtheorem{corollary}{Corollary}
\newtheorem{lemma}{Lemma}
\title{The classification of preordered spaces in terms of monotones: complexity and optimization} 
\author{Pedro Hack, Daniel A. Braun, Sebastian Gottwald}
\date{ }
\begin{document}

\maketitle

\begin{abstract}
The study of complexity and optimization in decision theory involves both partial and complete characterizations of preferences over decision spaces in terms of real-valued monotones. With this motivation, and
following the recent introduction of new classes of monotones, like injective monotones or strict monotone multi-utilities, we present the classification of preordered spaces in terms of both the existence and cardinality of real-valued monotones and the cardinality of the quotient space. In particular, we take advantage of a characterization of real-valued monotones in terms of separating families of increasing sets in order to obtain a more complete classification consisting of classes that are strictly different from each other. As a result, we gain new insight into both complexity and optimization, and clarify their interplay in preordered spaces.
\end{abstract}

\section{Introduction}

The question of how well a preorder relation can be captured through real-valued functions is an ongoing research topic since the introduction of utility functions in the early days of mathematical economics. The key observation is that sometimes preferences can not only be measured locally to decide between two elements, but there might be a global real-valued preference function that fully captures the corresponding order relation. That is, in certain situations, one can not only choose a preferable item between any two items in a given set of options, but one can find a single function, or a family of functions, defined on the decision space whose function values quantify the preference relation, so that one can compare function values to decide about the order relation of the corresponding arguments. Since the existence of such functions only depends on the properties of the corresponding preorder, this idea can naturally be applied in many domains of science. In particular, instead of considering preference relations and utility functions on decision spaces, many systems of interest can be thought of as sets of possible states endowed with an order relation encapsulating the intrinsic tendency of the system to transition from one state to another. The fields where these ideas are relevant include \emph{thermodynamics} \cite{lieb1999physics,giles2016mathematical}, \emph{general relativity} \cite{bombelli1987space,minguzzi2010time}, \emph{quantum physics} \cite{nielsen1999conditions,brandao2015second} and \emph{economics} \cite{debreu1954representation,ok2002utility}, among others.  

The basic property of these real-valued functions $f$ is that they have to be \emph{monotones} with respect to the corresponding preorder $\preceq$, that is, $x\preceq y$ implies $f(x)\leq f(y)$. There are mainly three types of monotones that appear in this context: strict monotones \cite{alcantud2016richter,peleg1970utility,richter1966revealed}, injective monotones \cite{hack2022representing}, and utility functions \cite{debreu1954representation,debreu1964continuity}. In particular, these different types of monotones are used to classify preordered spaces mostly in two different ways: either by whether a given type of monotone exists, or, by whether there exists a family of such monotones, known as a \emph{multi-utility}, that characterizes the preorder completely \cite{evren2011multi,alcantud2013representations,alcantud2016richter,hack2022representing,bosi2012continuous,bosi2016continuous}. Equivalently, these spaces can be classified according to either the existence of optimization principles with certain characteristics or the complexity of the preorder, that is, the amount and type of multi-utilities that exist for them. Even though the cardinality of such representing families plays an important role, so far mostly the two cases of \emph{countable} multi-utilities and multi-utilities consisting of a \emph{single} element, that is, utilities, have been considered.


 
Moreover, several connections between both types of classifications have been pointed out in the literature \cite{alcantud2016richter,hack2022representing,alcantud2013representations,bosi2018upper}, but certain gaps in these connections have prevented the presentation of
 a general classification of preordered spaces through real-valued monotones. One of the aims of this contribution is to reduce this gap, achieving, thus, a more complete classification (see Figure \ref{fig:classification}) and, hence, a better understanding of both complexity and optimization, including how they are related, in preordered spaces.
In particular, we take advantage of a characterization of real-valued monotones in terms of families of increasing sets \cite{alcantud2013representations,hack2022representing} that allows to distinguish more classes of preordered spaces than before, both in terms of the cardinality of the multi-utilities and the cardinality of the quotient space of the preorder. Importantly, by providing the corresponding counter examples, we show that certain classes of preordered spaces are in fact strictly contained in each other, which, to our knowledge, was not known before.

\begin{figure}[!tb]
\centering
\begin{tikzpicture}[scale=0.75, every node/.style={transform shape}]
\node[ draw, fill=blue!5 ,text height = 16cm,minimum     width=16cm, label={[anchor=south,above=1.5mm]270: \textbf{Multi-utility}}]  (main) {};
\node[ draw, fill=blue!15, text height =12cm, minimum width = 12cm,xshift=1cm,yshift=1cm,label={[anchor=south,below=1.5mm]90: \textbf{Strict monotone}}] at (main.center)  (semi) {};
\node[ draw, fill=blue!15, text height =12.5cm, minimum width = 12cm,xshift=-1cm,yshift=-0.5cm,label={[anchor=south,above=1.5mm]270: \textbf{Multi-utility with cardinality $\mathfrak{c}$}}] at (main.center)  (semi) {};
\node[ draw, text height =12cm, minimum width = 12cm,xshift=1cm,yshift=1cm] at (main.center)  (semi) {};
\node[ draw,fill=blue!30, text height = 11cm, minimum width = 10.5cm,xshift=-1cm,yshift=-0.3cm,label={[anchor=south,above=1.5mm]270:$|X/\mathord{\sim}| \leq \mathfrak{c}$}] at (main.center) (active) {};
\node[ draw, text height =12cm, minimum width = 12cm,xshift=1cm,yshift=1cm] at (main.center)  (semi) {};
\node[ draw,fill=blue!30,yshift= 0.375cm,text height = 10.75cm, minimum width = 10cm,label={[anchor=south,above=1.5mm]275:\textbf{Strict monotone multi-utility with cardinality $\mathfrak{c}$}}] at (main.center) (active) {};
\node[ draw, text height = 11cm, minimum width = 10.5cm,xshift=-1cm,yshift=-0.3cm] at (main.center)  (semi) {};
\node[ draw,fill=blue!50, text height = 8cm, minimum width = 7cm,label={[anchor=south,above=1.5mm]270:\textbf{Injective monotone}}] at (main.center) (active) {};
\node[ draw,fill=blue!70, text height = 6cm, minimum width = 6cm,label={[anchor=south,above=1.5mm]270:\textbf{Countable multi-utility}}] at (main.center) (non) {};
\node[ draw,fill=blue!80, text height = 4cm, minimum width = 4cm,yshift=-0.1cm,xshift=0.6cm,label={[anchor=south,above=0.1mm]270: \textbf{Countable $|X/\mathord{\sim}|$}}] at (main.center) (non) {};
\node[ draw,fill=blue!80, text height = 4cm, minimum width = 4cm,yshift=0.7cm,xshift=-0.5cm,label={[anchor=north,below=1.5mm]90:\textbf{Finite multi-utility}}] at (main.center) (non) {};
\node[ draw,fill=blue!10, text height = 2cm, minimum width = 2.8cm,xshift=-0.5cm,label={[anchor=north,below=1cm]90:\textbf{Utility function}}] at (main.center) (non) {};
\node[ draw, text height = 4cm, minimum width = 4cm,yshift=-0.1cm,xshift=0.6cm] at (main.center) (non) {};
\node[ draw, text height = 4cm, minimum width = 4cm,yshift=-0.1cm,xshift=0.6cm] at (main.center) (non) {};
\end{tikzpicture}
\caption{Classification of preordered spaces according to the existence of various real-valued monotones. A distinction between our contributions here and previously known results can be found in the discussion (Section \ref{discussion}). Moreover, our contributions can be visualized in Figure \ref{fig:contributions}.}
\label{fig:classification}
\end{figure}
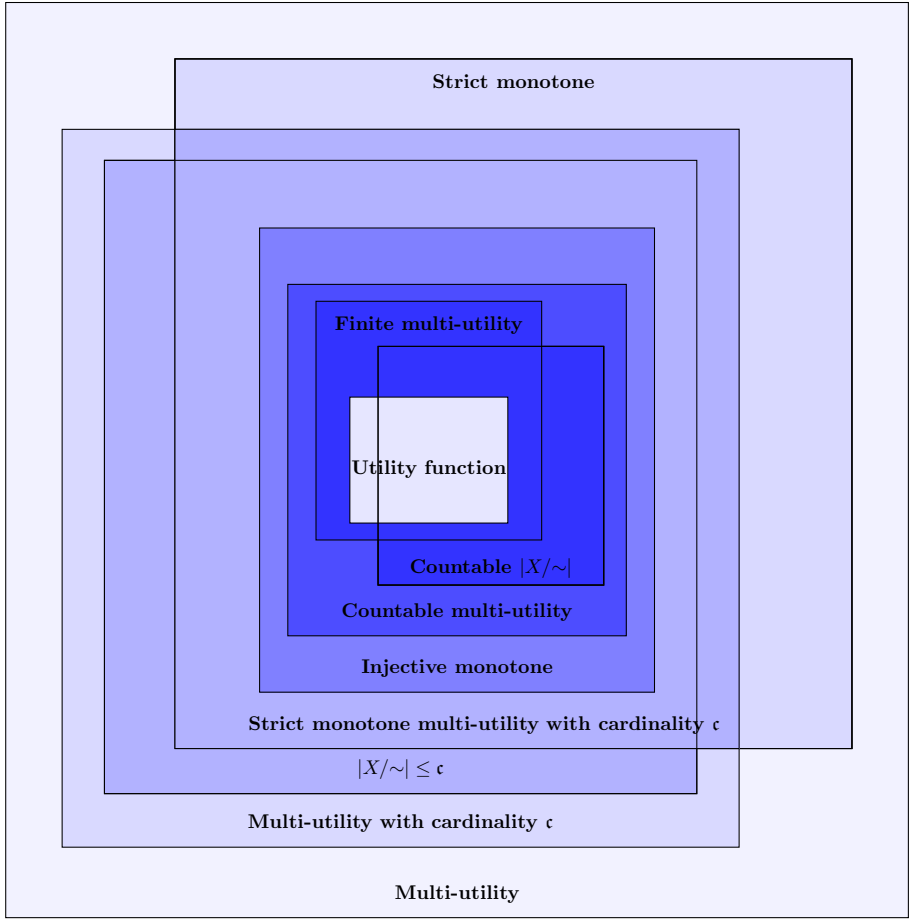

\section{Real-valued monotones and their role in complexity and optimization}

In this section, we introduce the classes of real-valued monotones that are relevant throughout this work and relate them to both  optimization and complexity. Before entering the general picture, we motivate them through an example with several applications, namely, the uncertainty preorder $\preceq_U$ \cite{hack2022representing}.

\begin{figure}[!tb]
\centering
\begin{tikzpicture}[scale=0.75, every node/.style={transform shape}]
\node[ draw, fill=blue!5 ,text height = 16cm,minimum     width=16cm, label={[anchor=south,above=1.5mm]270: \textbf{Multi-utility}}]  (main) {};
\node[ draw, fill=blue!15, text height =12cm, minimum width = 12cm,xshift=1cm,yshift=1cm,label={[anchor=south,below=1.5mm]90: \textbf{Strict monotone}}] at (main.center)  (semi) {};
\node[ draw, fill=blue!15, text height =12.5cm, minimum width = 12cm,xshift=-1cm,yshift=-0.5cm,label={[anchor=south,above=1.5mm]270: \textbf{Multi-utility with cardinality $\mathfrak{c}$}}] at (main.center)  (semi) {};
\node[ draw, text height =12cm, minimum width = 12cm,xshift=1cm,yshift=1cm] at (main.center)  (semi) {};
\node[ draw,fill=blue!30, text height = 11cm, minimum width = 10.5cm,xshift=-1cm,yshift=-0.3cm,label={[anchor=south,above=1.5mm]270:$|X/\mathord{\sim}| \leq \mathfrak{c}$}] at (main.center) (active) {};
\node[ draw, text height =12cm, minimum width = 12cm,xshift=1cm,yshift=1cm] at (main.center)  (semi) {};
\node[ draw,fill=blue!30,yshift= 0.375cm,text height = 10.75cm, minimum width = 10cm,label={[anchor=south,above=1.5mm]275:\textbf{Strict monotone multi-utility with cardinality $\mathfrak{c}$}}] at (main.center) (active) {};
\node[ draw, text height = 11cm, minimum width = 10.5cm,xshift=-1cm,yshift=-0.3cm] at (main.center)  (semi) {};
\node[ draw,fill=blue!50, text height = 8cm, minimum width = 7cm,label={[anchor=south,above=1.5mm]270:\textbf{Injective monotone}}] at (main.center) (active) {};
\node[ draw,fill=blue!70, text height = 6cm, minimum width = 6cm,label={[anchor=south,above=1.5mm]270:\textbf{Countable multi-utility}}] at (main.center) (non) {};
\node[ draw,fill=blue!80, text height = 4cm, minimum width = 4cm,yshift=-0.1cm,xshift=0.6cm,label={[anchor=south,above=0.1mm]270: \textbf{Countable $|X/\mathord{\sim}|$}}] at (main.center) (non) {};
\node[ draw,fill=blue!80, text height = 4cm, minimum width = 4cm,yshift=0.7cm,xshift=-0.5cm,label={[anchor=north,below=1.5mm]90:\textbf{Finite multi-utility}}] at (main.center) (non) {};
\node[ draw,fill=blue!10, text height = 2cm, minimum width = 2.8cm,xshift=-0.5cm,label={[anchor=north,below=1cm]90:\textbf{Utility function}}] at (main.center) (non) {};
\node[ draw, text height = 4cm, minimum width = 4cm,yshift=-0.1cm,xshift=0.6cm] at (main.center) (non) {};
\node[ draw, text height = 4cm, minimum width = 4cm,yshift=-0.1cm,xshift=0.6cm] at (main.center) (non) {};

\filldraw (2.25,1.5) circle (3pt) node[above=0.3mm] {\textbf{A}};
\filldraw (-2.25,-2) circle (3pt) node[above=0.3mm] {\textbf{B}};
\filldraw (3.8,-2) circle (3pt) node[above=0.3mm] {\textbf{C}};
\filldraw (4.65,-2) circle (3pt) node[above=0.3mm] {\textbf{D}};
\filldraw (-5.5,-2) circle (3pt) node[above=0.3mm] {\textbf{E}};
\filldraw (4.65,-6.5) circle (3pt) node[above=0.3mm] {\textbf{F}};
\filldraw (6,1.5) circle (3pt) node[above=0.3mm] {\textbf{G}};
\filldraw (6,-6.5) circle (3pt) node[above=0.3mm] {\textbf{H}};
\end{tikzpicture}
\caption{Contributions of this work to the classification of preordered spaces. We reproduce here Figure \ref{fig:classification}, incorporating a point for each preorder we have introduced that has allowed us to distinguish between classes. In particular, \textbf{A} stands for the preorder in Proposition \ref{no finite mu},  \textbf{B} for the one in Proposition \ref{improve}, \textbf{C} for the preorder in both Proposition \ref{strict mono no injective} and Corollary \ref{R multi-ut no inj mono}, \textbf{D} for that of Proposition \ref{mulit-ut and cardi} (ii) taking $I=\mathbb{R}$, \textbf{E} for the one in Corollary \ref{cardin no strict mono}, \textbf{F} for the preorder in Corollary \ref{last coro} taking $I=\mathbb{R}$, \textbf{G} for the space in Proposition \ref{strict mono no R multi} taking $I=\mathbb{R}$ and, lastly, \textbf{H} for the one in Proposition \ref{no strict mono no R multi} taking $I=\mathbb{R}$.}
\label{fig:contributions}
\end{figure}
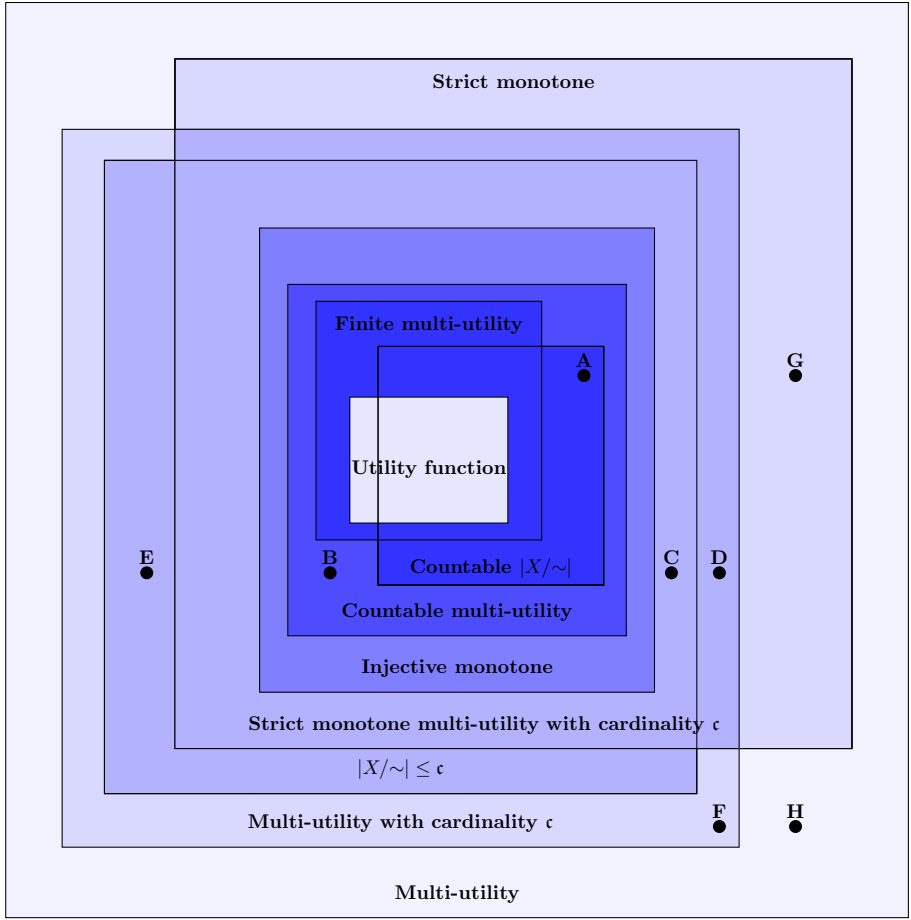

Consider a casino owner that intends to incorporate a new game to the casino, where all games under consideration follow the same idea, namely, bets are placed on the outcome of a random variable that is subsequently realized. Since the players win whenever they predict the outcome correctly, it is in the owner's interest to make the prediction as difficult as possible, that is, to make the game's outcome as uncertain as possible. For example, the game of rolling a fair die is preferred compared to that of rolling a loaded one. When considering the games over some finite set $\Omega$, the preference of the casino owner among them can be modeled on $\mathbb P_\Omega$,  the space of outcome probability distributions that are associated to the games,
by the uncertainty preorder
\begin{equation}
\label{uncert rela}
    p \preceq_U q \ \iff \ u_i(p) \leq u_i(q) \ \  \forall i\in \{1,..,|\Omega|-1\} \, ,
\end{equation}
where $u_i(p) \coloneqq -\sum_{n=1}^{i} p_n^{\downarrow}$ and $p^{\downarrow}$ denotes the decreasing rearrangement of $p$ (same components as $p$ but ordered decreasingly). Notice, $\preceq_U$ is known in mathematics, economics, and quantum physics as \emph{majorization} \cite{hardy1952inequalities,marshall1979inequalities,arnold2018majorization,brandao2015second}.

When deciding between certain game and another one, according to \eqref{uncert rela}, the owner evaluates $|\Omega|-1$ functions. Hence, the larger the number of possible outcomes, the harder it becomes to decide which game to choose. It is in this sense that we can say the number of functions  in \eqref{uncert rela} measures the \emph{complexity} or \emph{dimension} of the decision space.

In general, because of the gambling regulation, the owner may be required to choose games from some subset $B \subseteq \mathbb P_\Omega$ to diminish the gamblers' probability of loosing.
To automatize the decision, the owner uses the maximum entropy principle \cite{jaynes1957information,jaynes2003probability}, picking, hence, a distribution that maximizes Shannon entropy $H(p)\coloneqq -\mathbb E_p[\log p]$ over $B$. Although $H$ is not guaranteed to have maxima over every $B$, whenever it does, its maxima correspond to maximizing the preferences of the owner over $B$, that is, to distributions $p \in B$ such that there is no $q \in B$ that simultaneously fulfills $p \preceq_U q$ and $\neg(q \preceq_U p)$. Since the owner can make a decision on $B$ by simply optimizing $H$, we say $H$ is an \emph{optimization principle}. As we will see, the optimization properties of $H$ are closely related to how well $H$ preserves the properties of $\preceq_U$.

We introduce now the general picture in terms of both preorders and real-valued monotones, and return to optimization and complexity at the end of this section.
A \emph{preorder} $\preceq$ on a set $X$ is a reflexive ($x \preceq x$ $\forall x \in X$) and transitive ($x \preceq y$ and $ y \preceq z$ implies $x \preceq z$ $\forall x,y,z \in X$) binary relation. A tuple $(X, \preceq)$ is called a \emph{preordered space}  or \emph{preference} space and $X$ the \emph{ground set} or \emph{decision space}. For example, $(\mathbb P_\Omega,\preceq_U)$ is a preordered space. An antisymmetric ($x \preceq y$ and $y \preceq x$ imply $x=y$ $\forall x,y \in X$) preorder $\preceq$ is called a \emph{partial order}. The relation $x \sim y$, defined by $x\preceq y$ and $y\preceq x$, forms an \emph{equivalence relation} on $X$, that is, it fulfills the reflexive, transitive and symmetric ($x \sim y$ if and only if $y \sim x$ $\forall x,y \in X$) properties. Notice, a preorder $\preceq$ is a partial order on the quotient set $X/\mathord{\sim} = \{[x]|x\in X\}$, consisting of all equivalence classes $[x] = \{y\in X| y\sim x\}$. In case $x \preceq y$ and $\neg(x\sim y)$
for some $x,y \in X$ we say $y$ is \emph{strictly preferred} to $x$, denoted by $x \prec y$. If $\neg(x \preceq y)$ and $\neg(y \preceq x)$, we say $x$ and $y$ are \emph{incomparable}, denoted by $x \bowtie y$. Whenever there are no incomparable elements a preordered space is called \emph{total}. By the Szpilrajn extension theorem \cite{szpilrajn1930extension,harzheim2006ordered}, \textit{every partial order can be extended to a total order}, that is, to a partial order that is total.
Notice Szpilrajn extension theorem is a consequence of the axiom of choice, which we assume throughout this work. Equivalently, we assume $I \times I$ and $I$ are equinumerous for any infinite set $I$ and, thus, both $I \times \mathbb{N}$ and $I \cup I$ are also equinumerous to $I$.

In order to numerically characterize the relations established in a preordered space, one or several real-valued functions may be used. This results in a classification of preorders according to how well their information can be captured using these functions. We introduce now several classes that have been previously considered.  A real-valued function $f:X \rightarrow \mathbb{R}$ is called a \emph{monotone} or an \emph{increasing function} if $x \preceq y$ implies $f(x) \leq f(y)$ \cite{evren2011multi}. If the converse is also true, then $f$ is called a \emph{utility function} \cite{debreu1954representation}. Furthermore, if $f$ is a monotone and $x \prec y$ implies $f(x)<f(y)$, then $f$ is called a \emph{strict monotone}, a \emph{Richter-Peleg function} or an \emph{order-preserving function} \cite{alcantud2016richter}.
Similarly, a monotone $f$ is called an \emph{injective monotone}  if $f(x)=f(y)$ implies $x \sim y$, that is, if $f$ is injective considered as a function on the quotient set $X/\mathord{\sim}$ \cite{hack2022representing}.  For example, $H$ is a strict monotone but not an injective one on $(\mathbb P_\Omega,\preceq_U)$. Whenever a single function is insufficient to capture all the information in a preorder, for example when it is non-total (see \cite[Theorem 1.4.8]{bridges2013representations} for the total case), a family of functions may be used instead.
A family $V$ of real-valued functions $v: X \rightarrow \mathbb{R}$ is called a \emph{multi-utility (representation) of $\preceq$} \cite{evren2011multi} if 
\begin{equation*}
x \preceq y \iff v(x) \leq v(y)  \text{ }\forall v \in V \, . 
\end{equation*}
Whenever a multi-utility consists of strict monotones it is called a \emph{strict monotone} (or \emph{Richter-Peleg} \cite{alcantud2016richter}) \emph{multi-utility (representation) of $\preceq$}.  For example, $(u_i)_{i=1}^{|\Omega|-1}$ is a multi-utility that is not strict on $(\mathbb P_\Omega,\preceq_U)$. Analogously, if the multi-utility consists of injective monotones, we call it an \emph{injective monotone multi-utility (representation) of $\preceq$}. Notice the cardinality plays a key role in the classification when we consider multi-utilities.

The role of the different characterizations via a single function can be clarified alluding to optimization (see \cite[Section 4]{hack2022representing}). In this regard, monotones are not interesting in general, since they may carry no information about $\preceq$ (as in the case of constant functions). Strict monotones, however, are interesting from the perspective of optimization. In fact, strict monotones exist if and only if \emph{optimization principles} do \cite[Proposition 3]{hack2022representing}, where a function $f:X \to \mathbb R$ is an optimization principle or \emph{represents maximal elements} of $\preceq$ if, for any $B \subseteq X$, we have 
$\mathrm{argmax}_B  f \subseteq B^{\preceq}_M$, where $\mathrm{argmax}_B  f \coloneqq \{x \in B|
\nexists \text{ } y \in B \text{ such that } f(x)<f(y) \}$ and $B^{\preceq}_M \coloneqq \{ x \in B| \nexists \text{ } y \in B \text{ such that } x \prec y \}$.\footnote{Notice that the equivalence between the existence of optimization principles and that of strict monotones is not exactly what is stated in \cite[Proposition 3]{hack2022representing}, although it can be easily derived from it. The same holds true for injective monotones and injective optimization principles.} Hence, optimizing $f$ implies optimizing $\preceq$, like optimizing $H$ implies optimizing $\preceq_U$. Following this parallelism, the existence of injective monotones is equivalent to that of \emph{injective optimization principles} \cite[Proposition 3]{hack2022representing}, where a function $f:X \to \mathbb R$ is an injective optimization principle or \emph{injectively represents maximal elements} of $\preceq$ if, for any $B \subseteq X$ such that $\mathrm{argmax}_{B} f  \neq \emptyset$ , we have 
$\mathrm{argmax}_{B} f = [x_0]|_B$,
where $[x_0]|_B$ is the equivalence class of $x_0$ restricted to $B$. In particular, if $\preceq$ is a partial order, then optimizing $f$ yields a unique element, which is, ultimately, the goal of any optimization principle. Note that this is not the case in general for $H$, since it is not an injective monotone whenever $|\Omega| \geq 3$ \cite[Lemma 4]{hack2022representing}. However, the maximum entropy principle does output a single distribution in the cases where it is usually applied, namely, when $B \subseteq \mathbb P_{\Omega}$ is given by a linear constraint \cite{jaynes1957information,jaynes2003probability}. As a final remark, notice strict monotones allow the existence of \emph{local} injective optimization principles, that is, those where injectivity is fulfilled for some subset $B \subseteq X$ \cite{white1980notes}, which contrasts with the \emph{global} approach from injective monotones, which works for all subsets $B \subseteq X$. In summary, the existence of both strict and injective monotones can be characterized in terms of optimization principles.

The purpose of the different classes of characterizations via a family of functions or multi-utility can be clarified in terms of complexity, where we consider the sort of characterizations and the number of functions needed for each characterization as measures of complexity. Multi-utilities form the first class, where the aim is to find the minimal family of monotones $V$ such that any false preference, $\neg(x \preceq y)$, is contradicted, at least, by one monotone, $v(x)>v(y)$ for some $v \in V$.  Strict monotone multi-utilities form the second class, which can be characterized by those multi-utilities $V$ which fulfill $x \prec y \iff v(x) < v(y)  \text{ }\forall v \in V$. The third class, injective monotone multi-utilities, possesses the  properties of the previous two plus the fact $x \sim y \iff v(x)=v(y)$ for \emph{any} $v \in V$. Note, on the contrary, the previous two classes require that $v(x)=v(y)$ for all $v \in V$ to determine that $x \sim y$.

Inside each class, we can distinguish preorders in terms of the minimal amount of monotones that are required to form a multi-utility.
We will refer to this minimal amount, in the specific case of multi-utilities, as the \emph{dimension} of the preorder, since it characterizes the minimal number of copies of the real line that are needed, using their natural order, to fully represent a preorder. Note that this definition differs from the usual one by Dushnik and Miller \cite{dushnik1941partially}, since we restrict ourselves to products of the real line with its usual ordering instead of the general approach in \cite{dushnik1941partially} (see \cite{hack2022geometrical} for a discussion regarding the notion of dimension for partial orders). In fact, since their definition is restricted to partial orders, the distinction between  strict and injective monotone multi-utilities can be improved. Their definition considers a family of \emph{realizers} $(\preceq_i)_{i \in I}$, i.e. partial orders $\preceq_i$ that are total, fulfill that $x \preceq y$ implies $x \preceq_i y$ for all $i \in I$, and such that any false preference, $\neg(x \preceq y)$, is contradicted, at least, by one linear extension, $y \prec_i x$ for some $i \in I$. Such partial orders, however, cannot be defined using a strict monotone that is not injective $v$,
since there exist $x,y \in X$ such that $x \bowtie y$ and $v(x)=v(y)$. As a result, we obtain both $x \sim_v y$, given that we define $x \preceq_v y \iff v(x) \leq v(y)$, and $x \neq y$, contradicting, thus, antisymmetry.

Although several connections between the existence of these real-valued monotones are known \cite{evren2011multi,alcantud2016richter,hack2022representing}, we further clarify the relation between them throughout the following section. Mainly, using a characterization of these classes in terms of families of increasing sets which separate the elements in a preordered space \cite{herden1989existence,alcantud2013representations,bosi2013existence,hack2022representing}, we introduce several counterexamples which allow us to distinguish the scope of the different classes and, hence, to improve on the study of both complexity and optimization, and their relation in preordered spaces.

\section{Classification of preorders through real-valued monotones}
\label{sect: class}

\subsection{Characterization of real-valued monotones by families of increasing sets}

 A subset $A\subseteq X$ is called \emph{increasing}, if for all $x\in A$, $x\preceq y$ implies that $y\in A$ \cite{mehta1986existence}. We say a family $(A_i)_{i\in I}$ of subsets $A_i\subseteq X$ \emph{separates $x$ from $y$}, if there exists $i\in I$ with $x\not\in A_i$ and $y\in A_i$. Families of increasing sets have been used to characterize the existence of several classes of preorders in terms of real-valued representations in the literature. We state these results in Lemma \ref{set charac} without proof.
 
 \begin{lemma}
\label{set charac}
Let $(X,\preceq)$ be a preordered space.
\begin{enumerate}[label=(\roman*)]
\item For any infinite set $I$, there exists a multi-utility with the cardinality of $I$ if and only if there exists a family of increasing subsets $(A_i)_{i \in I}$
that $\forall x,y\in X$ with $x \prec y$ separates $x$ from $y$, and $\forall x,y\in X$ with $x \bowtie y$ separates both $x$ from $y$ and $y$ from $x$.
 \item There exists a strict monotone if and only if there exists a countable family of increasing subsets that $\forall x,y\in X$ with $x \prec y$ separates $x$ from $y$.
 \item There exists an injective monotone if and only if there exists a countable family of increasing subsets that $\forall x,y\in X$ with $x \prec y$ separates $x$ from $y$ and $\forall x,y\in X$ with $x \bowtie y$ separates either $x$ from $y$ or $y$ from $x$.
\end{enumerate}
\end{lemma}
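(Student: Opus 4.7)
The plan is to treat all three parts via a single template: for the forward directions, extract increasing sets from monotones by taking upper level sets $\{x : v(x) \geq r\}$ at rational thresholds $r \in \mathbb{Q}$; for the reverse directions, turn increasing sets back into monotones via characteristic functions, either kept as a family (for multi-utility) or aggregated into a single real-valued function (for (ii) and (iii)).

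For the forward direction of (i), given a multi-utility $(v_i)_{i \in I}$, I would set $A_{i,r} \coloneqq \{x : v_i(x) \geq r\}$ and reindex the family over $I \times \mathbb{Q}$, which has cardinality $|I|$ since $I$ is infinite. Each $A_{i,r}$ is increasing because $v_i$ is a monotone. If $x \prec y$, the multi-utility property forces some coordinate $j$ with $v_j(x) < v_j(y)$, so any rational $r \in (v_j(x), v_j(y)]$ produces an $A_{j,r}$ separating $x$ from $y$. If $x \bowtie y$, the failure of both $x \preceq y$ and $y \preceq x$ immediately yields separation in both directions. The arguments for (ii) and (iii) are analogous with a single function and give a countable family of increasing sets: strictness yields separation for $x \prec y$, and in (iii) injectivity forces $f(x) \neq f(y)$ for $x \bowtie y$, giving separation in at least one direction.

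For the reverse direction of (i), I would set $v_i \coloneqq \chi_{A_i}$; monotonicity follows from $A_i$ being increasing, and the multi-utility property reduces to a contrapositive: if $x \not\preceq y$, then either $x \bowtie y$ or $y \prec x$, and in both cases the assumed separation yields some $A_i$ with $x \in A_i$, $y \notin A_i$, hence $v_i(x) > v_i(y)$. For (ii), the standard trick is $f \coloneqq \sum_{n \geq 1} 2^{-n}\chi_{A_n}$: positivity of the weights gives monotonicity, while separation at some $A_n$ for $x \prec y$ makes that single term strictly positive in $f(y) - f(x)$, so $f(x) < f(y)$.

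The main obstacle is the reverse direction of (iii). A naive base-$2$ sum may fail to be injective on $X/\mathord{\sim}$: for $x \bowtie y$, one set $A_n$ might separate $x$ from $y$ while several $A_{m_k}$ separate $y$ from $x$, producing exact cancellations like $2^{-n} = \sum_k 2^{-m_k}$. I would resolve this by using the ternary sum $f \coloneqq \sum_{n \geq 1} 3^{-n}\chi_{A_n}$: if $f(x) = f(y)$ then $\sum_n 3^{-n} d_n = 0$ with $d_n \in \{-1,0,1\}$, and the standard dominance estimate $3^{-N} > \sum_{k > N} 3^{-k} = 3^{-N}/2$ applied to the smallest nonzero index forces every $d_n$ to vanish. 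Hence $\chi_{A_n}(x) = \chi_{A_n}(y)$ for every $n$, so no $A_n$ separates $x$ from $y$ in either direction; by the hypothesis this occurs only when $x \sim y$, establishing that $f$ is an injective monotone.
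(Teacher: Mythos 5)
Your proof is correct. Note that the paper itself states this lemma \emph{without} proof, deferring to the literature (to \cite{bosi2013existence,alcantud2013representations} for $(i)$ and to \cite{hack2022representing} for $(ii)$ and $(iii)$), so there is no in-paper argument to compare against line by line; but the constructions you use are exactly the ones the paper implicitly relies on when it reuses these proofs inside Proposition \ref{R-P multi charac}: rational upper level sets $\{x : v(x)\geq r\}$ for the forward directions (the paper's remark that this yields a family of cardinality $I\times\mathbb{N}\simeq I$ matches your $I\times\mathbb{Q}$ reindexing, which is legitimate under the paper's standing assumption of the axiom of choice), and characteristic functions of increasing sets, respectively weighted sums $\sum_n \lambda_n\chi_{A_n}$, for the reverse directions. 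The one genuinely delicate step is the reverse direction of $(iii)$, and you identified and resolved it correctly: with weights $2^{-n}$ injectivity on $X/\mathord{\sim}$ can indeed fail, since the signed differences $d_n\in\{-1,0,1\}$ admit exact dyadic cancellations of the form $2^{-N}=\sum_{k>N}2^{-k}$ (the separation hypothesis only gives one direction for incomparable pairs, so mixed signs are unavoidable); your base-$3$ sum kills this, because the tail bound $\sum_{k>N}3^{-k}=3^{-N}/2<3^{-N}$ forces all $d_n$ to vanish, and vanishing of all $d_n$ contradicts the separation hypothesis unless $x\sim y$. This is a complete and self-contained proof of all three equivalences.
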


The proof of $(i)$ can be found in \cite{bosi2013existence,alcantud2013representations} and that of $(ii)$ and $(iii)$ in \cite{hack2022representing}. The characterizations in Lemma \ref{set charac} can be useful to distinguish certain classes of preorders in terms of real-valued monotones, as we showed in \cite[Proposition 8]{hack2022representing}, where we used them to build a preorder where injective monotones exist and countable multi-utilities do not. Note that $(i)$ is not true for finite sets $I$, because there are preordered spaces that have a finite multi-utility but do not have a finite separating family of increasing subsets, for example majorization.


The statements in Lemma \ref{set charac} can be complemented with a characterization of the existence of strict monotone multi-utilities with the cardinality of an infinite set $I$, which we include in the following proposition.

\begin{proposition}
\label{R-P multi charac}
If $(X,\preceq)$ is a preordered space and 
$I$ is a set of infinite cardinality, then the following are equivalent.
\begin{enumerate}[label=(\roman*)]
\item There exists a strict monotone multi-utility with the cardinality of $I$.
\item There exist a strict monotone and a multi-utility with the cardinality of $I$.
\item There exists a family of increasing sets $(A_i)_{i \in I}$ which separates $x$ from $y$ if $x \bowtie y$
and a countable set $I' \subseteq I$ such that $(A_i)_{i \in I'}$
separates $x$ from $y$ if $x \prec y$.
\end{enumerate}
\end{proposition}

\begin{proof}
Clearly, $(i)$ implies $(ii)$ by definition. To show $(ii)$ implies $(iii)$, notice, given there is a multi-utility with the cardinality of $I$, we can follow the proof of Lemma \ref{set charac} $(i)$ in \cite{alcantud2013representations} to show there exists a family of increasing sets with the cardinality of $I \times \mathbb{N}$, which is equinumerous to $I$, which separates $x$ from $y$ whenever $x \bowtie y$. We can similarly follow the proof of Lemma \ref{set charac} $(ii)$ in \cite{hack2022representing} to get, given there exists a strict monotone, there exists a countable family of increasing sets which separates $y$ from $x$ whenever $x \prec y$. Since $I \cup \mathbb{N}$ is equinumerous to $I$, we get the desired result. In order to show $(iii)$ implies $(i)$, we can again follow both \cite{alcantud2013representations} and \cite[Proposition 7] {hack2022representing}. From the first one, we can construct a multi-utility $(u_i)_{i \in I}$ and, from the second one, we can construct a strict monotone $v$. Finally, we consider, as in \cite[Theorem 3.1]{alcantud2016richter}, the family of monotones $(v_{i,n})_{i \in I, n \in \mathbb{N}}$ where $v_{i,n} \coloneqq u_i + \alpha_n v$, where $(\alpha_n)_{n \in \mathbb{N}}$ is a numeration of the rational numbers which are greater than zero. This family can be shown to be a strict monotone multi-utility and has the cardinality of $I \times \mathbb{N}$, which is the same as that of $I$.
\end{proof}

Notice, in case the set $I$ is finite, the relation between $(i)$ and $(ii)$ is addressed in Proposition \ref{finite relations}.

\subsection{Improving the classification of preorders}
\label{imp class}

In this section, we present several results that improve on the classification of preordered spaces in terms of real-valued monotones. When applicable, we include, right after the proof of the result, an interpretation in terms of either complexity, optimization or the interplay between them.

Let us begin with the relation between preorders which have strict monotones and those which have injective monotones. Clearly, an injective monotone is also a strict monotone, since $x\prec y$ and $f(x)=f(y)$ contradicts injectivity. There are, however, preordered spaces with strict monotones and without injective monotones, as was shown in \cite[Proposition 1]{hack2022representing}. 
The argument there is
purely in terms of cardinality, since, whenever injective monotones exist, we have $|X/\mathord{\sim}| \leq \mathfrak{c}$ with $\mathfrak{c}$ the cardinality of the continuum, but there are preordered spaces with strict monotones and $|X/\mathord{\sim}|=|\mathcal{P}(\mathbb{R})|$. We can, however, improve upon this by showing there are preordered spaces where $X/\mathord{\sim}$ has the cardinality of the continuum and strict monotones exist while injective monotones do not. We include such a preordered space in Proposition \ref{strict mono no injective}.


\begin{proposition}
\label{strict mono no injective}
There are preordered spaces $(X,\preceq)$ where $X/\mathord{\sim}$ has the cardinality of the continuum $\mathfrak{c}$ and 
strict monotones exist while injective monotones do not.
\end{proposition}

\begin{proof}
Consider $X \coloneqq [0,1] \cup [2,3]$ equipped with $\preceq$ where 
\begin{equation}
\label{order def 2}
    x \preceq y \iff 
    \begin{cases}
    x,y \in [0,1] \text{ and } x \leq y,\\
    x,y \in [2,3] \text{ and } x \leq y,\\
    x  \in [0,1],\text{ } y \in [2,3] \text{ and } x+2<y,\\
    x  \in [2,3],\text{ } y \in [0,1] \text{ and } x-2 <y
    \end{cases}
    \end{equation}
    $\forall x,y \in X$ (see Figure \ref{fig 2} for a representation of $\preceq$). Notice $(X,\preceq)$ is a preordered space and $v: X \to \mathbb{R}$ where $x \mapsto x$ if $x \in [0,1]$ and $x \mapsto x-2$ if $x \in [2,3]$ is a strict monotone.  We will show that any family $(A_i)_{i \in I}$, where $A_i \subseteq X$ is increasing $\forall i \in I$ and $\forall x,y \in X$ such that $x \bowtie y$ there exists some $i \in I$ such that either $x \not \in A_i$ and $y \in A_i$ or $y \not \in A_i$ and $x \in A_i$, is uncountable. Since the existence of some $(A_i)_{i \in I}$ with these properties and countable $I$ is implied by the existence of an injective monotone by Lemma \ref{set charac} $(iii)$, we obtain that there is no injective monotone for $X$.
    
   Let $(A_i)_{i \in I}$ be a family with the properties in the last paragraph and, for each $x \in [0,1]$, define $y_x \coloneqq x+2$. Since $x \bowtie y_x$ by definition, there exists some $A_x \in (A_i)_{i \in I}$ such that either $x \in A_x$ and $y_x \not \in A_x$ or $y_x \in A_x$ and $x \not \in A_x$. We fix such an $A_x$ for each $x \in [0,1]$ and consider the map $f:[0,1] \to (A_i)_{i \in I}$, $x \mapsto A_x$. Consider some $x,z \in [0,1]$ such that $A_x=A_z$ and assume $x \neq z$. We show first the case where $z < x$ leads to contradiction. Assume first $x \in A_x$ and $y_x \not \in A_x$. Then, since $A_x=A_z$, we either have $z \in A_x$ or $y_z \in A_x$. Both cases lead to contradiction, since we get $y_x \in A_x$ because $A_x$ is increasing and we either have $z \prec y_x$ with $z \in A_x$ or $y_z \prec y_x$ with $y_z \in A_x$. We can proceed analogously if we assume $y_x \in A_x$, relying on the fact both $z \prec x$ and $y_z \prec x$ hold.
    In case we assume $x <z$, we also achieve a contradiction following the same argument but interchanging the role of $x$ and $z$.
    Thus, $x \neq z$ leads to a contradiction and by injectivity of $f$ we get $|[0,1]| \leq |(A_i)_{i \in I}|$. As a consequence, $X$ has no injective monotone.
\end{proof}

\begin{figure}[!tb]
\centering
\begin{tikzpicture}
\node[rounded corners, draw,fill=blue!10, text height = 3cm, minimum width = 11cm,xshift=4cm,label={[anchor=west,left=.1cm]180:\textbf{B}}] {};
\node[rounded corners, draw,fill=blue!10, text height = 3cm, minimum width = 11cm,xshift=4cm,yshift=-4cm,label={[anchor=west,left=.1cm]180:\textbf{A}}] {};
    \node[main node] (1) {$x+2$};
    \node[main node] (2) [right = 2cm  of 1]  {$y+2$};
    \node[main node] (3) [right = 2cm  of 2]  {$z+2$};
    \node[main node] (4) [below = 2cm  of 1] {$x$};
    \node[main node] (5) [right = 2cm  of 4] {$y$};
    \node[main node] (6) [right = 2cm  of 5] {$z$};

    \path[draw,thick,->]
    (4) edge node {} (2)
    (4) edge node {} (3)
    (5) edge node {} (3)
    (4) edge node {} (5)
    (5) edge node {} (6)
    (1) edge node {} (2)
    (2) edge node {} (3)
    (1) edge node {} (5)
    (1) edge node {} (6)
    (2) edge node {} (6)
    (1) edge [bend left] node {} (3)
    (4) edge [bend right] node {} (6)
    ;
\end{tikzpicture}
\caption{Graphical representation of a preordered space, defined in Proposition \ref{strict mono no injective}, with the cardinality of the continuum and where strict monotones exist while injective monotone do not. In particular, we show $A \coloneqq [0,1]$, $B \coloneqq [2,3]$ and how $x,y,z \in A$, $x<y<z$, are related to $x+2,y+2,z+2 \in B$. Notice, an arrow from an element $w$ to an element $t$ represents $w \prec t$.}
\label{fig 2}
\end{figure}
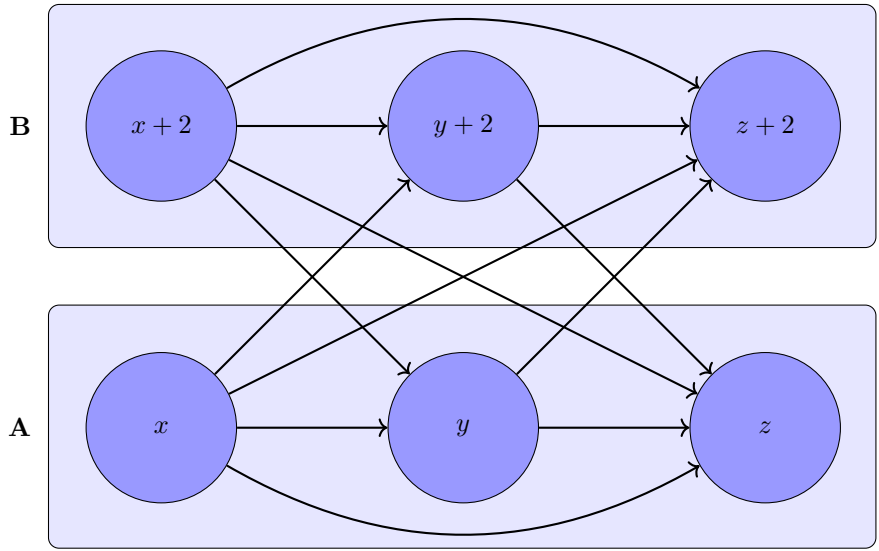

Proposition \ref{strict mono no injective} shows there are preference spaces for which optimization principles exist while injective ones do not. In particular, it shows it is sufficient to consider preference spaces with a continuum decision space to find such cases (notice, as shown in \cite[Proposition 5]{hack2022representing}, preference spaces with a countable ground set always have injective optimization principles). Furthermore, it shows that, unlike when the decision space is countable, the existence of local injective optimization principles and that of global ones are not equivalent. Hence, provided the decision space is sufficiently large, differences between local and global optimization arise.
Notice, the proof of Proposition \ref{strict mono no injective} relies on the existence of connections between several elements, which allow us to assure the sets from Lemma \ref{set charac} that separate the preorder differ when different elements inside certain sets are considered. This is the reason why a related preorder was used in \cite[Proposition 8]{hack2022representing} to show
countable multi-utilities and injective monotones are not equivalent. One may think the trivial preorder on the real line
$(\mathbb{R},=)$ would have an injective monotone, the identity, and no countable multi-utility.\footnote{We say a binary relation $\preceq$ on a set $X$ is a \emph{trivial ordering} if $x \preceq y \iff x=y$ $\forall x,y \in X$.} However, due to it being completely disconnected, $(\chi_{\leq q},\chi_{\geq q})_{q \in \mathbb{Q}}$ is a countable multi-utility, where $\chi_{\leq q}(x) \coloneqq 1$ if $x \leq q$ and $\chi_{\leq q}(x) \coloneqq 0$ otherwise, and $\chi_{\geq q}(x) \coloneqq 1$ if $x \geq q$ and $\chi_{\geq q}(x) \coloneqq 0$ otherwise.

While the existence of injective monotones implies the existence of multi-utilities with the cardinality of the continuum (in particular, composed of injective monotones), as we showed in \cite[Proposition 4]{hack2022representing}, the converse was unknown up to now. The preordered space in Proposition \ref{strict mono no injective} shows the converse in false. Actually, it shows the stronger statement that the existence of strict monotone multi-utilities with cardinality $\mathfrak{c}$ is still not sufficient for the existence of an injective monotone, as we state in Corollary \ref{R multi-ut no inj mono}.

\begin{corollary}
\label{R multi-ut no inj mono}
There are preordered spaces which have 
strict monotone multi-utilities with cardinality $\mathfrak{c}$ and no injective monotone.
\end{corollary}

\begin{proof}
We can use the counterexample from Proposition \ref{strict mono no injective} which has no injective monotone. Moreover, it is straightforward to see that $(\chi_{i(x)})_{x \in X}$ is a multi-utility with cardinality $\mathfrak{c}$  \cite{evren2011multi}, where $\chi_A$ is the indicator function of a set $A$ and $i(x) \coloneqq \{y \in X| x \preceq y \}$ $\forall x \in X$. Since there exist strict monotones, as we showed in the proof of Proposition \ref{strict mono no injective}, we can follow Proposition \ref{R-P multi charac} and get that there exist 
strict monotone multi-utilities with cardinality $\mathfrak{c}$.
\end{proof}

Corollary \ref{R multi-ut no inj mono} states, by Proposition \ref{R-P multi charac}, that the existence of optimization principles for preference spaces with continuum dimension is not enough for injective optimization principles to exist. This differs from the case of countable complexity, where injective optimization principles always exist \cite{hack2022representing}. Thus, whenever the preference space is sufficiently involved, local and global injective optimization principles do not coincide in general.
Notice, Corollary \ref{R multi-ut no inj mono} implies the class of preorders with injective monotones is strictly contained inside the class where multi-utilities with cardinality $\mathfrak{c}$ exist. In fact, we can improve upon this modifying the preorder in Proposition \ref{strict mono no injective} to show there are preordered spaces where multi-utilities with cardinality $\mathfrak{c}$ exist while strict monotones do not. We present such a preorder in Proposition \ref{R muli-uti no strict mono}, which is the same as the one in Proposition \ref{strict mono no injective} with the exception that we have $x \prec y_x$ instead of $x \bowtie y_x$ $\forall x \in [0,1]$.

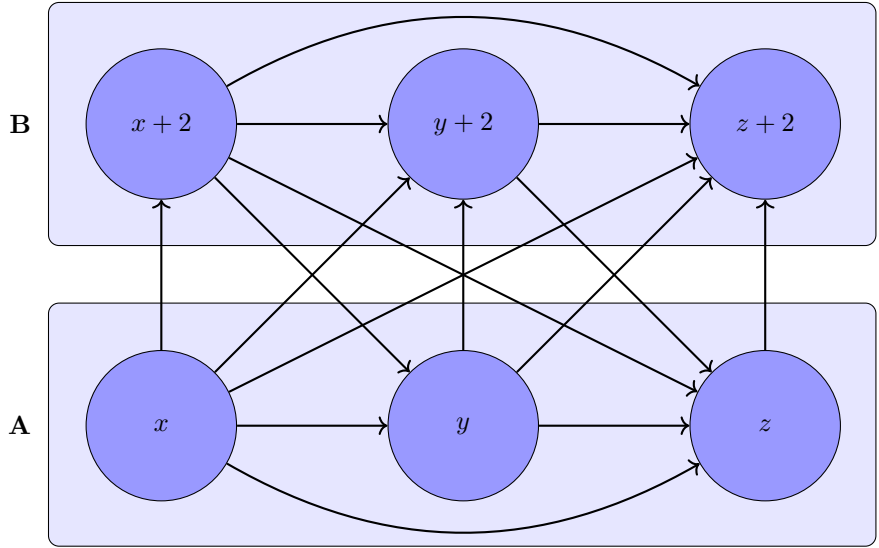
\begin{figure}[!tb]
\centering
\begin{tikzpicture}
\node[rounded corners, draw,fill=blue!10, text height = 3cm, minimum width = 11cm,xshift=4cm,label={[anchor=west,left=.1cm]180:\textbf{B}}] {};
\node[rounded corners, draw,fill=blue!10, text height = 3cm, minimum width = 11cm,xshift=4cm,yshift=-4cm,label={[anchor=west,left=.1cm]180:\textbf{A}}] {};
    \node[main node] (1) {$x+2$};
    \node[main node] (2) [right = 2cm  of 1]  {$y+2$};
    \node[main node] (3) [right = 2cm  of 2]  {$z+2$};
    \node[main node] (4) [below = 2cm  of 1] {$x$};
    \node[main node] (5) [right = 2cm  of 4] {$y$};
    \node[main node] (6) [right = 2cm  of 5] {$z$};

    \path[draw,thick,->]
    (4) edge node {} (1)
    (4) edge node {} (2)
    (4) edge node {} (3)
    (5) edge node {} (3)
    (6) edge node {} (3)
    (1) edge node {} (5)
    (2) edge node {} (6)
    (1) edge node {} (6)
    (5) edge node {} (2)
    (4) edge node {} (5)
    (5) edge node {} (6)
    (1) edge node {} (2)
    (2) edge node {} (3)
    (1) edge [bend left] node {} (3)
    (4) edge [bend right] node {} (6)
    ;
\end{tikzpicture}
\caption{Graphical representation of a preordered space, defined in Proposition \ref{R muli-uti no strict mono}, which has multi-utilities with  cardinality $\mathfrak{c}$ and no strict monotone. In particular,  we show $A \coloneqq [0,1]$, $B \coloneqq [2,3]$ and how $x,y,z \in A$, $x<y<z$, are related to $x+2,y+2,z+2 \in B$. Notice, an arrow from an element $w$ to an element $t$ represents $w \prec t$.}
\label{fig 4}
\end{figure}

\begin{proposition}
\label{R muli-uti no strict mono}
There are prerordered spaces which have multi-utilities with cardinality $\mathfrak{c}$ and no strict monotone.
\end{proposition}

\begin{proof}
Consider $X \coloneqq [0,1] \cup [2,3]$ equipped with $\preceq$ where 
\begin{equation}
\label{order def 3}
x \preceq y \iff 
\begin{cases}
    x,y \in [0,1] \text{ and } x \leq y,\\
    x,y \in [2,3] \text{ and } x \leq y,\\
x  \in [0,1],\text{ } y \in [2,3] \text{ and } x+2 \leq y\\
x  \in [2,3],\text{ } y \in [0,1] \text{ and } x-2 <y
\end{cases}
\end{equation}
$\forall x,y \in X$ (see Figure \ref{fig 4} for a representation of $\preceq$), which differs from \eqref{order def 2} only in $x+2\leq y$ instead of $x+2< y$ for $x\in [0,1]$ and $y\in [2,3]$. Notice $(X,\preceq)$ is a preordered space and there is a multi-utility with cardinality $\mathfrak{c}$ as in the proof of Corollary \ref{R multi-ut no inj mono}. We will show that any family $(A_i)_{i \in I}$, where $A_i \subseteq X$ is increasing $\forall i \in I$ and $\forall x,y \in X$ such that $x \prec y$ there exists some $i \in I$ such that $y \in A_i$ and $x \not \in A_i$, is uncountable. Since the existence of some $(A_i)_{i \in I}$ with these properties and countable $I$ is implied by the existence of a strict monotone by Lemma \ref{set charac} $(ii)$, we conclude that there is no strict monotone for $X$.

Let $(A_i)_{i \in I}$ be a family with the properties in the last paragraph and, for each $x \in [0,1]$, define $y_x \coloneqq x+2$. Since $x \prec y_x$ by definition, there exists some $A_x \in (A_i)_{i \in I}$ such that both $y_x \in A_x$ and $x \not \in A_x$ hold. We fix such an $A_x$ for each $x \in [0,1]$ and consider the map $f:[0,1] \to (A_i)_{i \in I}$, $x \mapsto A_x$. Consider some $x,z \in [0,1]$ such that $A_x=A_z$ and assume $x \neq z$. We  show first the case where $z < x$ leads to contradiction. Since $A_x=A_z$, we have $y_z \in A_x$. Given the fact $A_x$ is increasing and $y_z \prec x$ by definition, we get $x \in A_x$, a contradiction. In case we assume $x <z$, we also achieve a contradiction following the same argument but interchanging the role of $x$ and $z$. Thus, $x \neq z$ leads to contradiction and we get, by injectivity of $f$, $|[0,1]| \leq |(A_i)_{i \in I}|$. As a consequence, $X$ has no strict monotone.
\end{proof}

The relation between optimization and complexity is improved in Proposition \ref{R muli-uti no strict mono}, where we show that having a continuum dimension is not sufficient for optimization principles to exist, which contrasts with the case where the dimension is countable \cite{alcantud2016richter}. In summary, no optimization principle may exist provided the complexity of the preference space is large enough (see, also, Corollary \ref{|A| MU no R-P}).
Notice, essentially, we recover in Proposition \ref{R muli-uti no strict mono} the \emph{lexicographic plane}, the classical counterexample used by Debreu \cite{debreu1954representation,debreu1959theory} to show the existence of total preordered spaces without utility functions. Another counterexample, which relies on Szpilrajn extension theorem, can be found in \cite[A.2.1]{hack2022representing}.  Notice, in particular, Proposition \ref{R muli-uti no strict mono} implies that the class of preordered spaces with strict monotone multi-utilities with cardinality $\mathfrak{c}$ is strictly contained inside the class with multi-utilities of the same cardinality. This contrasts with the fact that countable multi-utilities and countable strict monotone multi-utilities coincide for any preordered space \cite[Proposition 4.1]{alcantud2016richter}. In fact, they also coincide with countable injective monotone multi-utilities \cite[Proposition 6]{hack2022representing}. Notice, also, the preordered space in Proposition \ref{R muli-uti no strict mono} shows the stronger fact
that strict monotones do not always exist when $X/\mathord{\sim}$ has cardinality $\mathfrak{c}$, as we state
in Corollary \ref{cardin no strict mono}.

\begin{corollary}
\label{cardin no strict mono}
There are preordered spaces $(X,\preceq)$ where $X/\mathord{\sim}$ has cardinality $\mathfrak{c}$
and strict monotone multi-utilities with cardinality $\mathfrak{c}$ do not exist.
\end{corollary}

\begin{proof}
Consider the preordered space in Proposition \ref{R muli-uti no strict mono}. Notice, since $(\chi_{i(x)})_{x \in X/\mathord{\sim}}$ is a multi-utility of cardinality $\mathfrak{c}$, strict monotone multi-utilities of cardinality $\mathfrak{c}$ and strict monotones are equivalent, by Proposition \ref{R-P multi charac}. Thus, they do not exist.
\end{proof}

Corollary \ref{cardin no strict mono} states that we can strengthen the bound in Proposition \ref{R muli-uti no strict mono} from the dimension to the decision space and, nonetheless, optimization principles do not exist. That is, optimization principles may not exist provided the amount of alternatives in the decision space is sufficiently large.
Notice, if $X/\mathord{\sim}$ is countable, then it has countable multi-utilities (we can follow the proof in Corollary \ref{R multi-ut no inj mono}) and, by \cite[Theorem 3.1]{alcantud2016richter}, countable strict monotone multi-utilities. Furthermore, we can follow 
Corollary \ref{R multi-ut no inj mono} and Proposition \ref{R-P multi charac} to conclude that every preorder with strict monotones 
has strict monotone multi-utilities with the cardinality of some infinite set $I$ if $X/\mathord{\sim}$ has the cardinality of $I$. As we show in Proposition \ref{mulit-ut and cardi}, the converse is not true, that is, whenever multi-utilities with the cardinality of an infinite set $I$ exist, we have $|X/\mathord{\sim}| \leq |\mathcal{P}(I)|$ with some preorders achieving equality. Furthermore, the bound cannot be improved even when strict monotone multi-utilities with the cardinality of $I$ exist. Equivalently, we show whenever an infinite Debreu upper dense subset $I \subseteq X$ exists, then we have $w(X,\preceq) \leq |\mathcal{P}(I)|$ where $w(X,\preceq)$ is the \emph{width} of $(X,\preceq)$, that is, the maximal cardinality of the antichains\footnote{Any two elements in an antichain are incomparable.} in $X$. Recall
we say a subset $Z\subseteq X$ is \emph{upper dense in the sense of Debreu} (or \emph{Debreu upper dense} for short) if $x \bowtie y$ implies that there exists a $z \in Z$ such that $x \bowtie z \preceq y$ \cite{hack2022representing}.\footnote{Notice, for a fixed pair $x,y \in X$ where $x \bowtie y$ holds, there exist $a_1,a_2 \in Z$ such that $x \bowtie a_1 \preceq y$ and $y \bowtie a_2 \preceq x$.}

\begin{proposition}
\label{mulit-ut and cardi}
Let $(X,\preceq)$ be a preordered space and $I$ be an infinite set.

\begin{enumerate}[label=(\roman*)]
\item If there exist multi-utilities with the cardinality of $I$, then $|X/\mathord{\sim}| \leq |\mathcal{P}(I)|$, where $\mathcal P(I)$ denotes the power set of $I$. Furthermore, the bound is sharp, i.e. it cannot be improved.
\item  Even if there exist strict monotone multi-utilities with the cardinality of $I$, the bound in $(i)$ is sharp.
\item If $I \subseteq X$ is a Debreu upper dense subset, then $w(X,\preceq) \leq |\mathcal{P}(I)|$, where $w(X,\preceq)$ is the width of $(X,\preceq)$. Furthermore, the bound is sharp.
\end{enumerate}
\end{proposition}

\begin{proof}
$(i)$ For the first statement, notice, by Lemma \ref{set charac} $(i)$, there exists a family of increasing sets $(A_i)_{i \in I}$ that $\forall x,y\in X$ with $x \prec y$ separates $x$ from $y$ and $\forall x,y\in X$ with $x \bowtie y$ separates both $x$ from $y$ and $y$ from $x$.
Consider the map $f: X/\mathord{\sim} \to \mathcal{P}(I)$, $[x] \mapsto B_x$ where $B_x \coloneqq \{ i \in I| [x] \subseteq A_i\}$. If $[x] \neq [y]$, then we either have $x \bowtie y$, $x \prec y$ or $y \prec x$ $\forall x \in [x]$, $ y \in [y]$. In any case, there exists some $i \in I$ such that $x \subseteq A_i$ and $y \not \subseteq A_i$ and vice versa. Thus, $B_x \neq B_y$ and $f$ is injective. We get $|X/\mathord{\sim}| \leq |\mathcal{P}(I)|$. 

For the second statement, consider the set $X \coloneqq \mathcal{P}(I)$ equipped with the preorder $\subseteq$, where $\subseteq$ denotes set inclusion. One can see $(f_i)_{i \in I}$ is a multi-utility for $X$, where $f_i: \mathcal{P}(I) \to \mathbb{R}$, $U \mapsto 1$ if $i \in U$ and $U \mapsto 0$ otherwise. Notice
we have $|\mathcal{P}(I)/\mathord{\sim}| = |\mathcal{P}(I)|$. Thus, the bound in the first statement cannot be improved.

$(ii)$ Consider $X \coloneqq \mathcal{P}(I)$ equipped with the  
trivial ordering $\preceq$.
Notice $(f_i)_{i \in I} \cup (g_i)_{i \in I}$ is a strict monotone multi-utility with the cardinality of $I \cup I$, which is equinumerous to $I$, where $f_i: X \to \mathbb{R}$, $U \mapsto 1$ if $i \in U$ and $U \mapsto 0$ otherwise and $g_i \coloneqq -f_i$, and we also have $|\mathcal{P}(I)/\mathord{\sim}| = |\mathcal{P}(I)|$. Thus, the bound in $(i)$ cannot be improved.

$(iii)$ Consider $A$ an antichain of $X$ and, for each $x \in A$, $I_x \coloneqq \{i \in I| i \preceq x\}$. We will show the map $f:A \to \mathcal{P}(I)$, $x \mapsto I_x$ is injective, proving, thus, any antichain $A$ fulfills $|A| \leq |\mathcal{P}(I)|$ which leads to $w(X,\preceq)\leq |\mathcal{P}(I)|$. Given $x,y \in A$, $x \neq y$, we have $x \bowtie y$ and, by Debreu upper density of $I$, there exists some $i \in I$ such that $x \bowtie i \preceq y$. As a consequence, $i \in I_y$ and $i \not \in I_x$. Resulting in $f(x)=I_x \not = I_y=f(y)$ and, hence, in $f$ being injective.

For the second statement, consider the preorder $(\Sigma^* \cup \Sigma^\omega,\preceq)$ where $\Sigma \coloneqq \{0,1\}$, $ \Sigma^*$ is the set of finite sequences over $\Sigma$, $\Sigma^\omega$ is the set of infinite sequences over $\Sigma$, if $x  \in \Sigma^*$ and $y \in \Sigma^\omega$ then $x \preceq_C y$ if $x$ is a prefix of $y$ and $\preceq$ is defined $\forall x,y \in \Sigma^* \cup \Sigma^\omega$ like
\begin{equation*}
    x \preceq y \iff 
    \begin{cases}
    x=y\\
    x \preceq_C y.
    \end{cases}
\end{equation*}
Notice $\Sigma^*$ is a countable Debreu upper dense subset and we have $w(X,\preceq) = |\Sigma^w|=|\mathcal{P}(\Sigma^*)|$. Thus, the bound in the first statement cannot be improved.
\end{proof}

Primarily, Proposition \ref{mulit-ut and cardi} shows how, whenever they are infinite, bounds on the dimension of a preference space result in bounds on the number of commodities in the decision space and, moreover, it shows that these bounds are optimal, since they are achieved by some preference spaces.
Notice, while the analog of $(iii)$ remains a question whenever $I$ is a finite set, the bounds in both $(i)$ and $(ii)$ do not hold. Although a trivial example supporting this assertion would be the real line with its usual order $(\mathbb{R},\leq)$, since the identity is a strict monotone finite multi-utility, we conclude this paragraph including two, perhaps, more interesting counterexamples. Majorization also proves $(i)$ is false when $I$ is finite, since, although it is defined through \eqref{uncert rela}, it fulfills $|\mathbb P_\Omega/\mathord{\sim_U}|=\mathfrak{c}$. We adapt from \cite[Example 2]{bosi2020topologies} a preorder which also illustrates that $(ii)$ is false when $I$ is finite. In particular, we take $X\coloneqq A \cup B$, where $A$ and $B$ are two copies of $\mathbb{R}/\{0\}$, and equip them with $\preceq$ where
\begin{equation}
\label{finite strict counter}
    x \preceq y \iff 
    \begin{cases}
    x,y \in A \text{ and } x \leq y,\\
    x,y \in B \text{ and } x \leq y,\\
    x  \in A, \text{ } x <0, \text{ } y \in B \text{ and } 0<y,\\
   x  \in B, \text{ } x <0, \text{ } y \in A \text{ and } 0<y
    \end{cases}
    \end{equation}
    $\forall x,y \in X$ (see Figure \ref{last fig} for a representation of $\preceq$). Note $|X/\mathord{\sim}|=\mathfrak{c}$ and $V \coloneqq \{v_1,v_2\}$ is a finite strict monotone multi-utility, where $v_1(x) \coloneqq x-1$ if $x \in A$ and $x<0$, $v_1(x) \coloneqq e^x-1$ if $x \in B$ and $x<0$, $v_1(x) \coloneqq 1-e^{-x}$ if $x \in B$ and $x>0$ and $v_1(x) \coloneqq x+1$ if $x \in A$ $x>0$, and $v_2(x) \coloneqq x-1$ if $x \in B$ and $x<0$, $v_2(x) \coloneqq e^x-1$ if $x \in A$ and $x<0$, $v_2(x) \coloneqq 1-e^{-x}$ if $x \in A$ and $x>0$ and $v_2(x) \coloneqq x+1$ if $x \in B$ and $x>0$.
    
    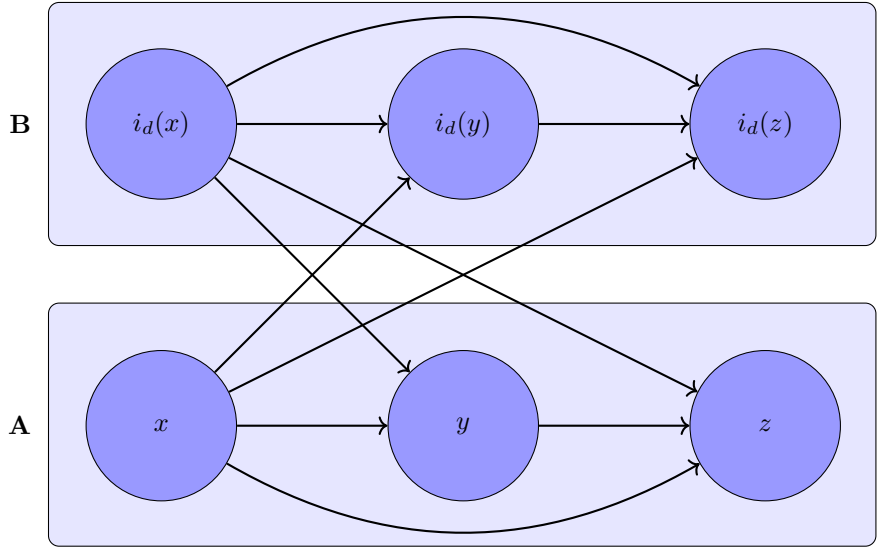
\begin{figure}[!tb]
\centering
\begin{tikzpicture}
\node[rounded corners, draw,fill=blue!10, text height = 3cm, minimum width = 11cm,xshift=4cm,label={[anchor=west,left=.1cm]180:\textbf{B}}] {};
\node[rounded corners, draw,fill=blue!10, text height = 3cm, minimum width = 11cm,xshift=4cm,yshift=-4cm,label={[anchor=west,left=.1cm]180:\textbf{A}}] {};
    \node[main node] (1) {$i_d(x)$};
    \node[main node] (2) [right = 2cm  of 1]  {$i_d(y)$};
    \node[main node] (3) [right = 2cm  of 2]  {$i_d(z)$};
    \node[main node] (4) [below = 2cm  of 1] {$x$};
    \node[main node] (5) [right = 2cm  of 4] {$y$};
    \node[main node] (6) [right = 2cm  of 5] {$z$};

    \path[draw,thick,->]
    (4) edge node {} (2)
    (4) edge node {} (3)
    (4) edge node {} (5)
    (5) edge node {} (6)
    (2) edge node {} (3)
    (1) edge [bend left] node {} (3)
    (1) edge node {} (5)
    (1) edge node {} (6)
    (1) edge node {} (2)
    (4) edge [bend right] node {} (6)
    ;
\end{tikzpicture}
\caption{Representation of a preordered space, defined by \eqref{finite strict counter}, where finite strict monotone multi-utilities exist and $|X/\mathord{\sim}|=\mathfrak{c}$. In particular, we relate three different points $x,y,z \in A$ with $i_d(x),i_d(y),i_d(z) \in C$, where $A,B \coloneqq \mathbb{R}/\{0\}$, $x<0<y<z$ and $i_d:A\to B$ is the identity on $\mathbb{R}/\{0\}$. Notice an arrow from an element $w$ to an element $t$ represents $w \prec t$.}
\label{last fig}
\end{figure}

Proposition \ref{mulit-ut and cardi} improves the relation between the existence of multi-utilities and the cardinality of $X/\mathord{\sim}$. In particular, whenever we have $|X/\mathord{\sim}| \leq \mathfrak{c}$, then there exist multi-utilities with cardinality $\mathfrak{c}$ \cite{evren2011multi} (see Corollary \ref{R multi-ut no inj mono}) and, whenever injective monotones exist, we have $|X/\mathord{\sim}| \leq \mathfrak{c}$. However, there are preorders where, although $|X/\mathord{\sim}| \leq \mathfrak{c}$ holds, injective monotones do not exist, like the one in Proposition \ref{strict mono no injective}. Furthermore, there are preorders with $|X/\mathord{\sim}| \leq \mathfrak{c}$ where strict monotones do not exist, like the example in Corollary \ref{cardin no strict mono}. Finally, there exist preorders with strict monotone multi-utilities with cardinality $\mathfrak{c}$ and where $|X/\mathord{\sim}| > \mathfrak{c}$, like the one in Proposition \ref{mulit-ut and cardi} $(ii)$.

Returning to Proposition \ref{R muli-uti no strict mono}, notice its converse also holds, that is, there are preordered spaces where strict monotones exist and multi-utilities with cardinality $\mathfrak{c}$ do not. In general, for any uncountable set $I$, there exist preordered spaces where stict monotones exist and multi-utilities with the cardinality of $I$ do not, as we show in Proposition 
\ref{strict mono no R multi}. Notice the counterexample we present is, essentially, the one we introduced in \cite[Proposition 8]{hack2022representing}, but for a larger ground set. Despite the large ground set, however, the proof is constructive.

\begin{proposition}
\label{strict mono no R multi}
If $I$ is an uncountable set, then there exist preordered spaces with strict monotones and without multi-utilities with the cardinality of $I$.
\end{proposition}

\begin{proof}
Consider $X \coloneqq B \cup C$, where $B$ and $C$ are two copies of $\mathcal{P}(I)$, equipped with $\preceq$ where
\begin{equation}
        x \preceq y \iff 
    \begin{cases}
    x=y\\
    x  \in B,\text{ } y \in C \text{ and } y \neq i_d(x)
    \end{cases}
    \end{equation}
    $\forall x,y \in X$ with $i_d:B \to C$ the identity on $\mathcal{P}(I)$ (see Figure \ref{fig 44} for a representation of $\preceq$). Notice $(X,\preceq)$ is a preordered space and $v: X \to \mathbb{R}$ $x \mapsto 0$ if $x \in B$ and $x \mapsto 1$ if $x \in C$ is a strict monotone.  By Lemma \ref{set charac} $(i)$ there exists a family $(A_j)_{j \in J}$ of increasing subsets of $X$ such that whenever $x \bowtie y$ there exists some $j \in J$ such that $x \in A_j$ and $y \not \in A_j$. It is enough to show that such a family has larger cardinality than $I$ in order to see that there is no multi-utility for $X$ with the cardinality of $I$.

Notice, $x \bowtie i_d(x)$  $\forall x \in B$. There exists, thus, some $A_x \in (A_j)_{j \in J}$ such that $x \in A_x$ and $i_d(x) \not \in A_x$. We fix such an $A_x$ for each $x \in B$ and consider the map $f:B \to (A_j)_{j \in J}$, $x \mapsto A_x$. Consider a pair $x,z \in B$ such that $A_x =A_z$ and assume $x \neq z$. Since $A_x$ is increasing, $z \prec i_d(x)$ and $z \in A_x$, we get $i_d(x) \in A_x$, a contradiction. Thus, $A_x=A_z$ implies $x=z$ and we have, by injectivity of $f$, $|\mathcal{P}(I)|=|B| \leq |(A_j)_{j \in J}|$. As a consequence, $X$ has no multi-utility with the cardinality of $I$.
\end{proof}

Proposition \ref{strict mono no R multi} shows that the existence of optimization principles does not imply any bound on the dimension of the preference space in question. This does vary with respect to the case where injective optimization principles exist since, there, the dimension cannot surpass the continuum. Hence, although injective optimization is not, optimization is possible in spaces of arbitrarily large complexity.
\begin{figure}[!tb]
\centering
\begin{tikzpicture}
\node[rounded corners, draw,fill=blue!10, text height = 3cm, minimum width = 11cm,xshift=4cm,label={[anchor=west,left=.1cm]180:\textbf{C}}] {};
\node[rounded corners, draw,fill=blue!10, text height = 3cm, minimum width = 11cm,xshift=4cm,yshift=-4cm,label={[anchor=west,left=.1cm]180:\textbf{B}}] {};
    \node[main node] (1) {$i_d(x)$};
    \node[main node] (2) [right = 2cm  of 1]  {$i_d(y)$};
    \node[main node] (3) [right = 2cm  of 2]  {$i_d(z)$};
    \node[main node] (4) [below = 2cm  of 1] {$x$};
    \node[main node] (5) [right = 2cm  of 4] {$y$};
    \node[main node] (6) [right = 2cm  of 5] {$z$};

    \path[draw,thick,->]
    (4) edge node {} (2)
    (4) edge node {} (3)
    (5) edge node {} (1)
    (5) edge node {} (3)
    (6) edge node {} (1)
    (6) edge node {} (2)
    ;
\end{tikzpicture}
\caption{Representation of a preordered space, defined in Proposition \ref{strict mono no R multi}, where strict monotones exist and multi-utilities with the cardinality of $I$, an uncountable set, do not. In particular, we relate three different points $x,y,z \in B$ with $i_d(x),i_d(y),i_d(z) \in C$, where $B,C \coloneqq \mathcal{P}(I)$ and and $i_d:B\to C$ is the identity on $\mathcal{P}(I)$. Notice an arrow from an element $w$ to an element $t$ represents $w \prec t$. Notice, also, this preorder is, essentially, the one we introduced in \cite[Proposition 8]{hack2022representing} with a larger ground set.}
\label{fig 44}
\end{figure}
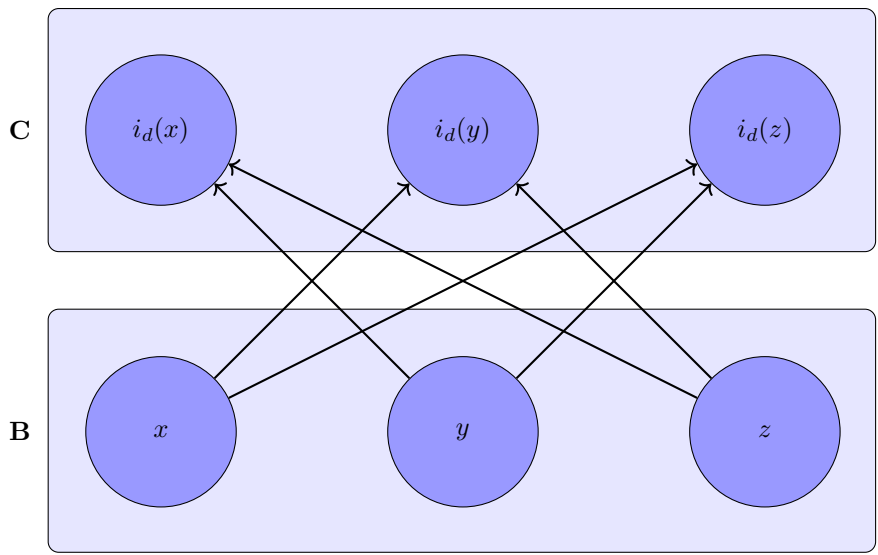
Notice, in particular, Proposition \ref{strict mono no R multi} shows there are preordered spaces with strict monotones and without strict monotone multi-utilities with cardinality $\mathfrak{c}$, which is not true for injective monotones (see \cite[Proposition 4]{hack2022representing}). It also shows that the class of preordered spaces where multi-utilities with cardinality $\mathfrak{c}$ exist is strictly contained inside the class of preordered spaces with multi-utilities, which consists of all preordered spaces \cite[Proposition 1]{evren2011multi}. In fact, as we show in Proposition \ref{no strict mono no R multi} through a variation of the preorder in Proposition \ref{R muli-uti no strict mono}, given an uncountable set $I$, there exist preordered spaces where neither strict monotones nor multi-utilities with the cardinality of $I$ exist. Notice the proof of Proposition \ref{no strict mono no R multi} relies on Szpilrajn extension theorem and, thus,
is non-constructive.

\begin{proposition}
\label{no strict mono no R multi}
If $I$ is an uncountable set, then there exist preordered spaces where neither multi-utilities with the cardinality of $I$ nor strict monotones exist.
\end{proposition}

\begin{proof}
Consider $X \coloneqq B \cup C$, where $B$ and $C$ are two copies of $\mathcal{P}(I)$ and consider on both $C$ and $B$ the total order $\preceq_S$ that results from applying Szpilrajn extension theorem \cite{szpilrajn1930extension} to the partial order defined by set inclusion on $\mathcal{P}(I)$. Furthermore, equip $X$ with $\preceq$ where
\begin{equation}
        x \preceq y \iff 
    \begin{cases}
    x \preceq_S y \text{ and } x,y \in B \\
    x \preceq_S y \text{ and } x,y \in C \\
    i_d(x) \preceq_S y, \text{ } x \in B \text{ and } y \in C \\
    x \prec_S i_d(y), \text{ } x \in C \text{ and }y \in B
    \end{cases}
    \end{equation}
$\forall x,y \in X$ with $i_d:B \to C$ the identity on $\mathcal{P}(I)$ (see Figure \ref{fig 7} for a representation of $\preceq$). Notice $(X,\preceq)$ is a preordered space. In analogy to Propositions \ref{R muli-uti no strict mono} and \ref{strict mono no R multi}, one can show that any family $(A_j)_{j \in J}$ of increasing subsets $A_j \subseteq X$ that separates $x$ and $y$ whenever  $x \prec y$ has larger cardinality than $I$. Since the existence of some $(A_j)_{j \in J}$ with those properties and $|J| \leq |I|$ is implied by both the existence of a multi-utility with the cardinality of $I$ by Lemma \ref{set charac} $(i)$ and the existence of a strict monotone by Lemma \ref{set charac} $(ii)$, we obtain that there is no multi-utility with the cardinality of $I$ nor a strict monotone for $X$.
\end{proof}

Proposition \ref{no strict mono no R multi} shows that, for any cardinal, there are preorders where both optimization principles do not exist and the dimension is larger than the cardinal. On the contrary, whenever the complexity is countable, (injective) optimization principles always exist \cite{hack2022representing}.
Note that the preorder we introduced in Proposition \ref{no strict mono no R multi} actually supports a stronger statement, which we include in Corollary \ref{stronger}. In order to prove it, we simply follow Proposition \ref{no strict mono no R multi} and add the fact that, as in Corollary \ref{R multi-ut no inj mono}, $(\chi_{i(x)})_{x \in X}$ is a multi-utility with the cardinality of
$\mathcal{P}(I)$.

\begin{corollary}
\label{stronger}
If $I$ is an uncountable set, then there exist preordered spaces where multi-utilities with the cardinality of $\mathcal{P}(I)$ exist, although neither strict monotones nor multi-utilities with the cardinality of $I$ do.
\end{corollary}

Regarding optimization and complexity, Corollary \ref{stronger} can be interpreted as Corollary Proposition \ref{no strict mono no R multi}.
To complement Propositions \ref{strict mono no R multi} and \ref{no strict mono no R multi}, we show, in Corollary \ref{|A| MU no R-P}, for any uncountable set $I$ there exist preorders which have multi-utilities with the cardinality of $I$ and no strict monotones. Notice, again, we follow the basic construction in Proposition \ref{R muli-uti no strict mono}, although we use the same non-constructive approach in Proposition \ref{no strict mono no R multi}.

\begin{figure}[!tb]
\centering
\begin{tikzpicture}
\node[rounded corners, draw,fill=blue!10, text height = 3cm, minimum width = 11cm,xshift=4cm,label={[anchor=west,left=.1cm]180:\textbf{C}}] {};
\node[rounded corners, draw,fill=blue!10, text height = 3cm, minimum width = 11cm,xshift=4cm,yshift=-4cm,label={[anchor=west,left=.1cm]180:\textbf{B}}] {};
    \node[main node] (1) {$i_d(x)$};
    \node[main node] (2) [right = 2cm  of 1]  {$i_d(y)$};
    \node[main node] (3) [right = 2cm  of 2]  {$i_d(z)$};
    \node[main node] (4) [below = 2cm  of 1] {$x$};
    \node[main node] (5) [right = 2cm  of 4] {$y$};
    \node[main node] (6) [right = 2cm  of 5] {$z$};

    \path[draw,thick,->]
    (4) edge node {} (1)
    (4) edge node {} (2)
    (4) edge node {} (3)
    (5) edge node {} (3)
    (6) edge node {} (3)
    (1) edge node {} (5)
    (2) edge node {} (6)
    (1) edge node {} (6)
    (5) edge node {} (2)
    (4) edge node {} (5)
    (5) edge node {} (6)
    (1) edge node {} (2)
    (2) edge node {} (3)
    (1) edge [bend left] node {} (3)
    (4) edge [bend right] node {} (6)
    ;
\end{tikzpicture}
\caption{Representation of a preordered space, defined in Proposition \ref{no strict mono no R multi}, which has no multi-utility with the cardinality of an uncountable set $I$ and no strict monotone. In particular, we show how $x,y,z \in B$, $x \prec y \prec z$, are related to $i_d(x) \prec i_d(y) \prec i_d(z) \in C$ where $B,C \coloneqq \mathcal{P}(I)$ and $i_d:B \to C$ is the identity on $\mathcal{P}(I)$. Notice an arrow from an element $w$ to an element $t$ represents $w \prec t$. Notice, also, this preorder is, essentially, the same as the one in the proof of Proposition \ref{R muli-uti no strict mono} (see Figure \ref{fig 4}) with a larger ground set. As a result, we used a non-constructive argument relying on Szpilrajn extension theorem to define it.}
\label{fig 7}
\end{figure}
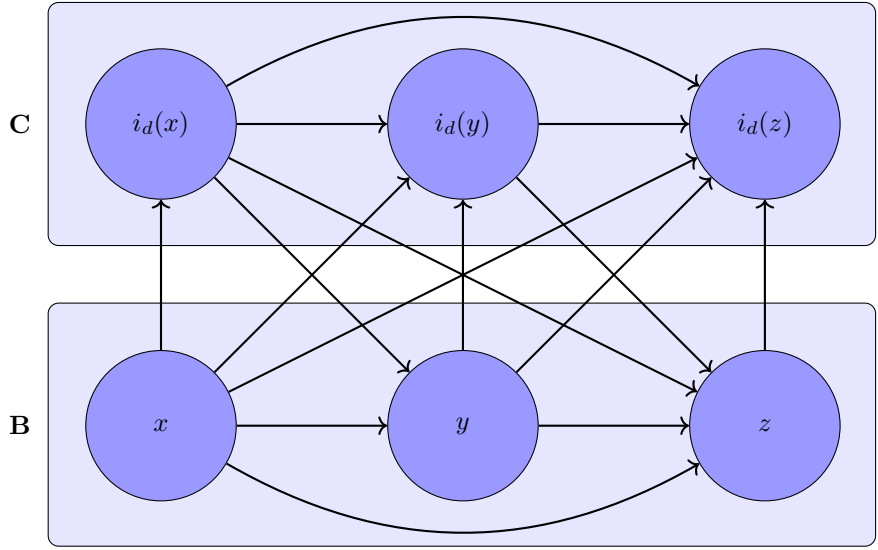

\begin{corollary}
\label{|A| MU no R-P}
If $I$ is an uncountable set, then there exist preordered spaces which have multi-utilities with the cardinality of $I$ and no strict monotone.
\end{corollary}

\begin{proof}
Consider $X \coloneqq C \cup B$, where $C$ and $B$ are two copies of $I$, and equip it with a preorder analogous to the one in Proposition \ref{no strict mono no R multi}. Notice $(\chi_{i(x)})_{x \in X}$ is a multi-utility for $X$. By slightly modifying the argument in Proposition \ref{no strict mono no R multi}, we conclude there are no strict monotones.
\end{proof}

Corollary \ref{|A| MU no R-P} shows that, unlike the countability ones \cite{alcantud2016richter,hack2022representing}, uncountability restrictions on the dimension of a preorder have no effect in general on the existence of optimization principles. Moreover,
as we show in Corollary \ref{last coro}, we can put together the preorders from Proposition \ref{mulit-ut and cardi} $(i)$ and Corollary \ref{|A| MU no R-P} to improve the relation between multi-utilities and the cardinality of $X/\mathord{\sim}$ even more.

\begin{corollary}
\label{last coro}
If $I$ is an uncountable set, then there exist preordered spaces where $|X/\mathord{\sim}|>|I|$ and multi-utilities with the cardinality of $I$ exist, while strict monotones do not.
\end{corollary}

\begin{proof}
Take $X \coloneqq A \cup B$ where $A$ and is the ground sets of the preorder in Corollary \ref{|A| MU no R-P} and $B$ is the ground set of the preorder in Proposition \ref{mulit-ut and cardi} $(i)$ without the empty set. We equip $X$ with the preorder in Corollary \ref{|A| MU no R-P} on $A$ and that of  Proposition \ref{mulit-ut and cardi} $(i)$ on $B$, leaving $x \bowtie y$ $\forall x,y \in X$ such that $x \in A$ and $y \in B$ or vice versa.
Since any strict monotone on $X$ would also be a strict monotone on $A$, they do not exist by Corollary \ref{|A| MU no R-P}. Notice we have $|X/\mathord{\sim}|>|I|$, since $|B/\mathord{\sim}|>|I|$. Notice, also,
$(g_i)_{i \in I} \cup (h_y)_{y \in A}$ is a multi-utility with the cardinality of $I$ for $X$, where $\forall i \in I$ $g_i(x) \coloneqq f_i(x)$ if $x \in B$ and $g_i(x) \coloneqq 0$ if $x \in A$, with $(f_i)_{i \in I}$ defined as in Proposition \ref{mulit-ut and cardi} $(i)$, and $\forall y \in A$ $h_y(x) \coloneqq \chi_{i(y)}$ if $x \in A$ and $h_y(x) \coloneqq 0$ if $x \in B$.
\end{proof}

Corollary \ref{last coro} also deals with the connections between optimization and complexity, showing that, even if the uncountability restrictions on the dimension do not apply to the decision space, there still exist preference spaces with no optimization principle.

To finish this section, since we have been mainly concerned with preordered spaces with infinite multi-utilities and uncountable $X/\mathord{\sim}$, we address both finite multi-utilities and countable $X/\mathord{\sim}$. The first thing to notice is the existence of finite multi-utilities does not imply $X/\mathord{\sim}$ is countable. This is exemplified by majorization \cite{marshall1979inequalities,arnold2018majorization}, since it is defined through a finite multi-utility \eqref{uncert rela} but the corresponding quotient space $\mathbb P_\Omega/\mathord{\sim_U}$ has the cardinality of the continuum.
It is straightforward to see, whenever $X/\mathord{\sim}$ is finite, there exists a finite multi-utility (see Corollary \ref{R multi-ut no inj mono}). However, as we show in Proposition \ref{no finite mu}, there exist preorders where $X/\mathord{\sim}$ is countably infinite and finite multi-utilities do not exist. Notice the preorder that supports this claim is, essentially, the same as the one in  Proposition \ref{strict mono no R multi}. However, in Proposition \ref{no finite mu}, we follow a simpler proof.

\begin{proposition}
\label{no finite mu}
There are preordered spaces $(X,\preceq)$ where $X/\mathord{\sim}$ is countably infinite and no finite multi-utilities exist.
\end{proposition}

\begin{proof}
Consider $ X \coloneqq \mathbb{Z}/\{0\}$ equipped with $\preceq$ where
\begin{equation*}
     n \preceq m \iff 
    \begin{cases}
    n=m\\
    n>0,\text{ } m <0 \text{ and } n \neq -m.
    \end{cases}
\end{equation*}
Notice $(\mathbb{Z}/\{0\})/\mathord{\sim}$ is countable. Assume there exists a finite multi-utility $(u_i)_{i=1}^k$. Notice for any pair $n,-n$ we have $n \bowtie -n$ and there must be some $i_n$ such that $u_{i_n}(-n) < u_{i_n}(n)$ by definition of multi-utility. If we consider, however, some $m \neq n$, then we have $u_{i_n}(m) \leq u_{i_n}(-n) < u_{i_n}(n) \leq u_{i_n}(-m)$. Thus, $i_m \neq i_n$. Considering w.l.o.g. $i_n=n$, we get $u_i(k+1) < u_i(-(k+1))$ for $i=1,..,k$. Thus, there is no multi-utility of cardinality $k$ for any $k<\infty$.
\end{proof}

Proposition \ref{no finite mu} shows that countability restrictions on the decision space do not necessarily imply finite bounds on the dimension of the preference space and, thus, on its complexity.
As a result of Proposition \ref{no finite mu}, there are preorders where countably infinite multi-utilities exist while finite ones do not. The preorder we used had, however, a countable $X/\mathord{\sim}$. We therefore complement this statement by showing in Proposition \ref{improve} that there are preorders with the same characteristics but uncountable $X/\mathord{\sim}$. 

\begin{proposition}
\label{improve}
There are preordered spaces $(X,\preceq)$ where $X/\mathord{\sim}$ is uncountable and, although countable multi-utilities exist, finite multi-utilities do not.
\end{proposition}

\begin{proof}
Let $\mathcal{P}_{inf}(\mathbb{N})$ be the set of infinite subsets of $\mathbb{N}$. Consider $X \coloneqq (\mathbb{N} \cup \mathcal{P}_{inf}(\mathbb{N}),\preceq)$ equipped with the preorder $\preceq$
\begin{equation*}
    x \preceq y \iff 
    \begin{cases}
    x=y\\
    x  \in \mathbb{N},\text{ } y \in \mathcal{P}_{inf}(\mathbb{N}) \text{ and } x \in y
    \end{cases}
\end{equation*}
$\forall x,y \in X$. Clearly, $|X/\mathord{\sim}|= \mathfrak{c}$, thus uncountable.

One can see $U \coloneqq (u_n,v_n)_{n \geq 0}$ is a countable multi-utility, where $u_n(x) \coloneqq 1$ if $x = n$ or $n \in x \in \mathcal{P}_{inf}(\mathbb{N})$ and $u_n(x) \coloneqq 0$ otherwise, and $v_n(x) \coloneqq 1$ if $n \not \in x$ and $x \in \mathcal{P}_{inf}(\mathbb{N})$ and $u_n(x) \coloneqq 0$ otherwise. Notice if $x \preceq y$ and $x \neq y$ then $x \in \mathbb{N}$ and $x \in y$. Thus, $u(x) \leq u(y)$ $\forall u \in U$. Assume now we have $\neg (x \preceq y)$. If $y \prec x$, then $y \in \mathbb{N}$ and $x \in \mathcal{P}_{inf}(\mathbb{N})$. Thus, there exists $m \in x$ such that $m \neq y$ and $u_m(y) < u_m(x)$. If $x \bowtie y$, then we consider four cases. If $x,y \in \mathbb{N}$, then $u_x(x)>u_x(y)$. If $x,y \in \mathcal{P}_{inf}(\mathbb{N})$, then, if there exists $n \in x/y$, we have $u_n(x)>u_n(y)$. Otherwise, there exists $n \in y/x$ and we have $v_n(x)>v_n(y)$. If $x \in \mathbb{N}$ and $y \in \mathcal{P}_{inf}(\mathbb{N})$, then $x \not \in y$ and we have $u_x(x)>U_x(y)$. If $y \in \mathbb{N}$ and $x \in \mathcal{P}_{inf}(\mathbb{N})$, then $y \not \in x$ and we have $v_y(x)>v_y(y)$. 

To conclude, we show there is no finite multi-utility. Let $A_0 \subseteq \mathcal{P}_{inf}(\mathbb{N})$, fix some $k \in \mathbb{N}$ and consider $(b_i)_{i=1}^{k+1} \subseteq A_0$, where $b_i \neq b_j$ if $i \neq j$, and $(A_i)_{i=1}^{k+1}$, where $A_i \coloneqq A_0/b_i$ for $i=1,..,k+1$.
Notice
$(b_i,A_i)_{i=1}^{k+1}$ is
a finite portion of the preorder in Proposition \ref{no finite mu}, since we have $b_i \preceq A_j$ if and only if $i \neq j$, and 
we can argue analogously as we did there
that no multi-utility with cardinality $k$ exists. 
Since $k$ is arbitrary, we obtain there is no finite multi-utility.  
\end{proof}

Proposition \ref{improve} proves we can relax the restriction on the decision space in Proposition \ref{no finite mu} from countable to uncountable and still find preference spaces with countably infinite dimension.


The preorder we introduced in Proposition \ref{improve} can, in fact, be used to improve the relation between real-valued monotones characterization of preorders and order density properties.
A subset $Z\subseteq X$, such that $x\prec y$ implies that there exists $z\in Z$ with $x\preceq z \preceq y$ is called \emph{order dense in the sense of Debreu} (or \emph{Debreu dense} for short) \cite{ok2002utility,bridges2013representations}. Accordingly, we say that $(X, \preceq)$ is \emph{Debreu separable} \cite{mehta1986existence} if there exists a countable Debreu dense set in $(X,\preceq)$.
Similarly, $(X,\preceq)$ is called
\emph{Debreu upper separable} if there exists a countable subset which is both Debreu dense and Debreu upper dense \cite{hack2022representing} (we defined Debreu upper dense subsets right before Proposition \ref{mulit-ut and cardi}).
As was shown in \cite[Proposition 9]{hack2022representing}, Debreu upper separable preorders have countable multi-utilities. However, there exist preorders which have countable multi-utilities but are not Debreu separable, like majorization for $|\Omega| \geq 3$ (see \cite[Lemma 5 (ii)]{hack2022representing}). In Proposition \ref{complem}, we complement these results 
by showing a preorder where countable multi-utilities exist and countable Debreu upper dense subsets do not. In particular, we show the preorder we introduced in Proposition \ref{improve} has no countable Debreu upper dense subsets although, as we showed there, it has countable multi-utilities. Notice, a preorder where the weaker fact that injective monotones exist and countable Debreu upper dense subsets do not can be found in \cite[Propsition 8]{hack2022representing}. There, an injective monotone was introduced and, although it was shown no countable multi-utility exists, it is easy to see any Debreu upper dense subset would be uncountable.

\begin{proposition}
\label{complem}
There are preordered spaces where countable multi-utilities exist and every Debreu upper dense subset is uncountable.
\end{proposition}

\begin{proof}
Consider the preorder $X \coloneqq (\mathbb{N} \cup \mathcal{P}_{inf}(\mathbb{N}),\preceq)$ from Proposition \ref{improve}. As we showed there, countable multi-utilities exist. Assume there exists a Debreu upper dense subset $D \subseteq X$. Consider $y \in \mathcal{P}_{inf}(\mathbb{N})$, $y \neq \mathbb{N}$. Notice there exists some $n_y \in \mathbb{N}/y$ and $y \cup \{n_y\} \bowtie y$. Since $D$ is Debreu upper dense, there exists some $d \in D$ such that $y \cup \{n_y\} \bowtie d \preceq y$. Since $d \preceq y$ implies either $d \in y$ or $d=y$, and $d \in y$ implies $d \in y \cup \{n_y\}$, thus $d \preceq y \cup \{n_y\}$ contradicting the definition of $d$, we have $d=y$. As a result, $\mathcal{P}_{inf}(\mathbb{N})/\{\mathbb{N}\} \subseteq D$ and $D$ is uncountable.
\end{proof}

Although usually expressed as an equivalence, for total preorders, between the existence of utility functions and that of countable Debreu dense subsets, the classical result by Debreu (see \cite[Theorem 1.4.8]{bridges2013representations}) can, alternatively, be stated as the equivalence between the existence of countable multi-utilities and that of countable Debreu separable subsets. From this perspective, the interest of the result lies in the fact that the existence of a countable family of increasing sets that separate (in the sense of Lemma \ref{set charac}) the elements in a preorder results in a countable subset of elements with this separation (in the sense of order density) property. Since the natural extension of Debreu separability to non-total preorders is Debreu upper separability, Proposition \ref{complem}, together with \cite[Propsition 8]{hack2022representing}, shows that the transition from separating sets to elements does not hold in general for non-total preorders.

Notice, although they coincide when they are countable (see \cite[Proposition 4.1]{alcantud2016richter} and \cite[Proposition 6]{hack2022representing}), it remains an open question how the different sorts of multi-utilities relate to each other when they are finite. As a first result in this direction, we finish with a characterization of preordered spaces with finite injective monotones multi-utilities. 

\begin{proposition}
\label{finite relations}
If $(X,\preceq)$ is a preordered space, then the following are equivalent:
\begin{enumerate}[label=(\roman*)]
\item There exists a finite multi-utility $(u_i)_{i \leq N}$ such that the image of the non-injective set
\begin{equation}
\label{non-inj}
    I_{u_i} \coloneqq \{r \in \mathbb{R}| \exists x,y \in X \text{ such that } x,y \in u_i^{-1}(r) \text{ and } \neg(x \sim y)\}
\end{equation}
is countable $\forall i \leq N$.
\item There exists a finite injective monotone multi-utility $(v_i)_{i \leq N}$.
\end{enumerate}
\end{proposition}

\begin{proof}
By definition, given an injective monotone multi-utility $(v_i)_{i \leq N}$, we have $I_{v_i}= \emptyset$ $\forall i \leq N$. Conversely, consider $u \in (u_i)_{i \leq N}$ a monotone such that the image of its non-injective set $I_u$ is countable. Take $(r_n)_{n \geq 0}$ a numeration of $I_u$, $(y_n)_{n \geq 0} \subseteq X$ a set such that $u(y_n)=r_n$ $\forall n \geq 0$ and, w.l.o.g., an injective monotone $c_0: X \to (0,1)$. Notice injective monotones exist under the hypotheses, as we showed in \cite[Proposition 5]{hack2022representing}. 
Define, then, 
\begin{equation*}
    w_0(x):= 
    \begin{cases}
    u(x) & \text{if} \text{ } u(x) < r_0\\
    u(x) + c_0(x) & \text{if} \text{ } u(x) = r_0\\
    u(x)+1 & \text{else.}
    \end{cases}
\end{equation*}
$\forall x \in X$. Notice $I_{w_0} \subset I_u$, since $x_0 \not \in I_{w_0}$, and we have both $u(x) \leq u(y)$ implies $w_0(x) \leq w_0(y)$ and $u(x) < u(y)$ implies $w_0(x) < w_0(y)$ $\forall x,y \in X$.
Similarly, consider a family of injective monotones $(c_n)_{n \geq 1}$ such that $c_n:X \to (0,2^{-n})$ for $n \geq 1$
and define, also for $n \geq 1$,
\begin{equation*}
    w_{n}(x):= 
    \begin{cases}
    w_{n-1}(x) & \text{if} \text{ } w_{n-1}(x) < w_{n-1}(y_n)\\
    w_{n-1}(x) + c_n(x) & \text{if} \text{ } w_{n-1}(x) = w_{n-1}(y_n)\\
    w_{n-1}(x)+2^{-n} & \text{else}
    \end{cases}
\end{equation*}
$\forall x \in X$.
Notice $I_{w_{n-1}} \subset I_{w_n}$ holds $\forall n \geq 1$, since $x_n \not \in I_{w_n}$, and we have both $w_{n-1}(x) \leq w_{n-1}(y)$ implies $w_n(x) \leq w_n(y)$ and $w_{n-1}(x) < w_{n-1}(y)$ implies $w_n(x) < w_n(y)$ $\forall x,y \in X$. Lastly, consider the pointwise limit $v(x) \coloneqq \lim_{n \to \infty} w_n(x)$. Notice $v$ is well-defined and, also, 
an injective monotone, since $I_v= \emptyset$ by construction.

Following the same procedure for each monotone in $(u_i)_{i \leq N}$, we get a family of injective monotones $(v_i)_{i \leq N}$. To conclude it is a multi-utility, we need to show, $\forall x,y \in X$ with $\neg(x \preceq y)$, there exists some $i \leq N$ such that $v_i(x)>v_i(y)$. If $y \prec x$, then $v_i(x)>v_i(y)$ $\forall i \leq N$ by definition of injective monotone. Otherwise, if $x \bowtie y$, there exists some $i \leq N$ such that $u_i(x) > u_i(y)$. Thus, we also have $v_i(x) > v_i(y)$. Hence, $(v_i)_{i \leq N}$ is a multi-utility.
\end{proof}

Notice we can weaken the hypothesis, assuming, instead of \eqref{non-inj}, that
\begin{equation*}
 \{r \in \mathbb{R}| \exists x,y \in X \text{ such that } x,y \in u_i^{-1}(r) \text{ and } x \prec y\}
\end{equation*}
is countable $\forall i \leq N$, to conclude, analogously, that the existence of finite multi-utilities and that of finite strict monotone multi-utilities are equivalent. Notice, as a result, we obtain the existence of finite multi-utilities coincides with that of finite strict monotone multi-utilities and that of finite injective monotone multi-utilities whenever  $X/\mathord{\sim}$ is countable. The general case where $X/\mathord{\sim}$ is uncountable (in particular, when $|X/\mathord{\sim}|\leq \mathfrak{c}$ since, otherwise, there are no injective monotones), remains open. This is due to the fact the technique in Proposition \ref{finite relations} cannot be used and $I_{u_i}$ is not necessarily countable $\forall i \leq N$, as one can see in majorization, for example. There, taking $u_i$ as in \eqref{uncert rela}, we have $(\frac{i}{|\Omega|},1) \subseteq I_{u_i}$ $\forall i \leq |\Omega|-1$ and, thus, $I_{u_i}$ is uncountable $\forall i \leq |\Omega|-1$. Notice, also, the technique in Proposition \ref{finite relations} is similar to the one we used in \cite[Proposition 2]{hack2022representing}, where we showed the existence of an injective monotone is equivalent to that of a strict monotone $f$ whose non-injective set
\begin{equation*}
\{ x \in X| \text{ }\exists y \in X \text{ }s.t. \text{ } f(x)=f(y) \text{ and } x \bowtie y \}
\end{equation*}
is countable. Notice the hypothesis there is stronger, since the hypothesis that the image of the non-injective set $I_f$ is countable is insufficient, as one can see using the preorder in \cite[Proposition 1 (i)]{hack2022representing}.

The technique in Proposition \ref{finite relations} can actually be used to prove that countable multi-utilities and countable injective monotone multi-utilities always coincide (see \cite[Proposition 6]{hack2022representing}). The only detail of importance is, whenever a countable multi-utility exists, there exists, by Lemma \ref{set charac} $(i)$, a countable family of increasing sets $(A_n)_{n \geq 0}$ that $\forall x,y\in X$ with $x \prec y$ separates $x$ from $y$ and $\forall x,y\in X$ with $x \bowtie y$ separates both $x$ from $y$ and $y$ from $x$. In particular, $(\chi_{A_n})_{n \geq 0}$ is a countable multi-utility with the property that
$I_{\chi_{A_n}}$ is finite $\forall n \geq 0$. Since injective monotones exist, we can follow Proposition \ref{finite relations} to construct a countable injective monotone multi-utility.

\section{Discussion}
\label{discussion}

In this work, we have improved the classification of preordered spaces through real-valued monotones in terms of the cardinality of multi-utilities and quotient spaces, c.f. Figure \ref{fig:classification}, and,
as a result, we have contributed to the study of complexity, optimization and their relation in preordered spaces. 


\paragraph{Classification of preordered spaces through real-valued monotones.}
The state of the classification of preordered spaces in terms of real-valued monotones can be found in Figure \ref{fig:classification}, whereas our contributions are shown in Figure \ref{fig:contributions}. In this paragraph, we summarize the relation between the different classes and distinguish between our results and the ones in the literature.
We will begin from the innermost class, preorders with utility functions, and finish with the outermost class, which contains all preorder \cite{evren2011multi}, that is, preorders with multi-utilities.

The relation between utility functions and the subsequent classes, finite multi-utilities and preorders with countable $X/\mathord{\sim}$ is as follows. A utility function is a finite multi-utility, although there are preordered spaces where finite multi-utilities exist and utilities do not, like majorization \cite{arnold2018majorization,marshall1979inequalities}. We can also use majorization to show there are preorders with a finite multi-utility where $X/\mathord{\sim}$ is uncountable. By Proposition \ref{no finite mu}, a countable $X/\mathord{\sim}$ does not imply there exists a finite multi-utility. Notice, also, preorders with utilities can have an uncountable $X/\mathord{\sim}$, the easiest example being $(\mathbb R,\leq)$, and any non-total preorder with countable $X/\mathord{\sim}$ has no utility function.

The next class of interest are preorders with countable multi-utilities, which are exactly those with countable strict monotone multi-utilities \cite[Proposition 4.1]{alcantud2016richter} and countable injective monotone multi-utilities \cite[Proposition 6]{hack2022representing}. By Proposition \ref{improve}, there are preorders with countable multi-utilities where $X/\mathord{\sim}$ is uncountable such that no finite multi-utility exists, although finite multi-utilities are, of course, countable.
Also, whenever $X/\mathord{\sim}$ is countable, there exists a countable multi-utility, namely, $(\chi_{i(x)})_{[x] \in X/\mathord{\sim}}$ \cite{evren2011multi}.

The following wider category are preorders with injective monotones, which are equivalent to those with injective monotone multi-utilities with cardinality $\mathfrak{c}$ by \cite[Proposition 4]{hack2022representing}. As we showed in \cite[Proposition 5]{hack2022representing}, injective monotones can be constructed from countable multi-utilities. However, again by \cite[Proposition 8]{hack2022representing}, the converse is false.

Injective monotones are contained inside two classes: preorders with strict monotone multi-utilities of cardinality $\mathfrak{c}$ and preorders where $|X/\mathord{\sim}|\leq\mathfrak{c}$. It is straightforward to see $|X/\mathord{\sim}|\leq\mathfrak{c}$ whenever injective monotones exist. Because of this, since it implies multi-utilities of cardinality $\mathfrak{c}$ exist \cite{evren2011multi}, and Proposition \ref{R-P multi charac}, strict monotone multi-utilities of cardinality $\mathfrak{c}$ exist whenever injective monotones do. However, by Proposition \ref{R-P multi charac} and Corollary \ref{R multi-ut no inj mono}, there are preordered spaces with strict monotone multi-utilities of cardinality $\mathfrak{c}$ and without injective monotones. Similarly, as Proposition \ref{strict mono no injective} shows, there are preorders where we have $|X/\mathord{\sim}| \leq \mathfrak{c}$ and no injective monotones. Moreover, by Proposition \ref{mulit-ut and cardi} (ii), having strict monotone multi-utilities of cardinality $\mathfrak{c}$ does not imply $|X/\mathord{\sim}| \leq \mathfrak{c}$. Conversely, as noticed in Corollary \ref{cardin no strict mono}, we also get a negative result if we interchange the role of both clauses, that is, there are preorders where $|X/\mathord{\sim}| \leq \mathfrak{c}$ holds and no strict monotone multi-utility of cardinality $\mathfrak{c}$ exists. Notice the preorder in Corollary \ref{cardin no strict mono} was, essentially, already introduced by Debreu in \cite{debreu1954representation}. As we stated in Proposition \ref{R-P multi charac}, having a strict monotone and a multi-utility of cardinality $\mathfrak{c}$, the following class of interest, is equivalent to having a strict monotone multi-utility of that cardinality.
However, by Proposition \ref{R muli-uti no strict mono}, $|X/\mathord{\sim}| \leq \mathfrak{c}$ does not imply there exists a strict monotone multi-utility of cardinality $\mathfrak{c}$. If we relax the implication of the statement to \emph{multi-utility of cardinality $\mathfrak{c}$}, then it is indeed true (see \cite{evren2011multi} or Corollary \ref{R multi-ut no inj mono}). 
There are, actually, preorders with a multi-utility of cardinality $\mathfrak{c}$ and no strict monotone multi-utility of that cardinality such that $|X/\mathord{\sim}|>\mathfrak{c}$, as Corollary \ref{last coro} shows. 
Finally, by Proposition \ref{strict mono no R multi}, there are preorders where strict monotones exist a multi-utilities of cardinality $\mathfrak{c}$ do not. In fact, by Proposition \ref{no strict mono no R multi}, there are preorders without both strict monotones and multi-utilities of cardinality $\mathfrak{c}$. This completes the results which are needed to construct Figure \ref{fig:classification}. Notice, although we have focused on the case $I=\mathbb{R}$, many of the results hold for a general uncountable set $I$, as we stated them in Section \ref{sect: class}. 

Aside from those in the last paragraph, there are four more results in Section \ref{sect: class}. Proposition \ref{finite relations} shows the equivalence between finite multi-utilities and finite injective monotone multi-utilities in well-behaved cases. Notice the only finite case which appears in Figure \ref{fig:classification} is that of multi-utilities, as the relation with the other types remains to be clarified. Proposition \ref{complem} improves upon \cite{hack2022representing}, where it was shown Debreu upper separable preorders have countable multi-utilities \cite[Proposition 9]{hack2022representing} while there are preorders with countable multi-utilities which are not Debreu separable \cite[Lemma 5]{hack2022representing}, by showing there exist preorders with countable multi-utilities where every Debreu upper dense subset is uncountable. Lastly, Corollary \ref{stronger} is slightly stronger than Proposition \ref{no strict mono no R multi} and uses the same preorder, while  Corollary \ref{|A| MU no R-P} is weaker than Corollary \ref{last coro}.  

\paragraph{Complexity and optimization.} Since the minimal cardinality of the existing multi-utilities can be used as a measure of complexity and the existence of optimization principles can be reformulated in terms of strict and injective monotones \cite{hack2022representing}, the classification of preorders in terms of real-valued monotones improves our knowledge regarding complexity, optimization and the connections between them. Although we omit it here for the sake of brevity, we can interpret the classification of preorders according to monotones (cf. the paragraph above and Figure \ref{fig:classification}) in terms of complexity and optimization (as we did right after presenting each result in Section \ref{sect: class}).

\paragraph{Debreu dimension.} There is a notion of dimension for partial orders which goes back to \cite{dushnik1941partially}, it has remained somewhat disconnected from the more intuitive geometrical notion, which corresponds to multi-utilities. In fact, there exist preorders where the classical definition of dimension is finite while the geometrical one is uncountable. In \cite{hack2022geometrical}, we propose a variation of the classical notion, called Debreu dimension, and, using results from this work, show that such a disconnection between this definition and the geometrical one does not occur. That is, we show that the geometrical dimension is countable if and only if the Debreu dimension also is. 

\paragraph{Open questions.}
Several scientific disciplines rely on preordered spaces and their representation via real-valued monotones. Thus, refining the classification via the introduction of new classes and establishing more connections between separated classes in cases of interest would, potentially, improve several areas, like utility theory \cite{debreu1954representation,rebille2019continuous} and the study of social welfare relation \cite{banerjee2010multi} in economics, statistical estimation \cite{hennig2007some} is statistics, equilibrium thermodynamics \cite{lieb1999physics,candeal2001utility}, entanglement theory \cite{nielsen1999conditions,turgut2007catalytic} and general relativity \cite{bombelli1987space,minguzzi2010time} in physics and, lastly, multicriteria optimization \cite{jahn2009vector,ehrgott2005multicriteria}. Specific questions that remain to be solved include, for example, the relation between the different sorts of finite multi-utilities we have introduced. In particular, it is unclear whether Proposition \ref{finite relations} can be improved or preorders with finite multi-utilities and no finite injective multi-utilities exist.



\newpage 





\newpage

\bibliographystyle{plain}
\bibliography{main.bib}

\end{document}